\newtheorem{theorem}{Theorem}[section]
\newtheorem{definition}{Definition}[section]
\newtheorem{remark}{Remark}[section]
\newtheorem{proposition}{Proposition}[section]
\newtheorem{example}{Example}[section]
\newcommand{\p}{\partial}
\newcommand{\pd}[2]{\frac{\partial #1}{\partial #2}} 
\newcommand{\pdd}[2]{\frac{\partial^2 #1}{\partial #2^2}} 
\newcommand{\R}{\mathbb{R}}
\newcommand{\T}[1]{\text{#1}}
\newcommand{\vl}{\Big|}
\begin{document}

\title[Article Title]{Leveraging Lie Group Symmetries to Enhance Physics-Informed Neural Networks for the Fundamental Solution of Linear PDEs}

%%=============================================================%%
%% GivenName	-> \fnm{Joergen W.}
%% Particle	-> \spfx{van der} -> surname prefix
%% FamilyName	-> \sur{Ploeg}
%% Suffix	-> \sfx{IV}
%% \author*[1,2]{\fnm{Joergen W.} \spfx{van der} \sur{Ploeg} 
%%  \sfx{IV}}\email{iauthor@gmail.com}
%%=============================================================%%

\author[1, 2]{\fnm{Xiaopei} \sur{Jiao}}\email{jiaoxiaopei@bimsa.cn}

\author[2]{\fnm{Fansheng} \sur{Xiong}}\email{fansheng\_xiong@bimsa.cn}
%\equalcont{These authors contributed equally to this work.}
%
%\author[1,2]{\fnm{Third} \sur{Author}}\email{iiiauthor@gmail.com}
%\equalcont{These authors contributed equally to this work.}

\affil[1]{\orgdiv{Department of Mathematical Science}, \orgname{Tsinghua University}, \orgaddress{\city{Beijing}, \postcode{100084}, \country{China}}}

\affil[2]{\orgname{Beijing Institute of Mathematical Sciences and Applications (BIMSA)}, \orgaddress{\city{Beijing}, \postcode{101408 }, \country{China}}}

%\affil[3]{\orgdiv{Department}, \orgname{Organization}, \orgaddress{\street{Street}, \city{City}, \postcode{610101}, \state{State}, \country{Country}}}

%%==================================%%
%% Sample for unstructured abstract %%
%%==================================%%

\abstract{Since the introduction of deep learning for solving partial differential equations (PDEs), there has been growing interest in real-time system responses, where the kernel function plays a key role. Physics-informed neural networks (PINNs), a popular mesh-free, semi-supervised learning tool, offer high flexibility. This paper explores the integration of Lie symmetry groups with deep learning techniques to enhance the numerical solutions of fundamental PDEs. We propose a novel approach that combines PINNs and Lie group theory to address computational inefficiencies in traditional methods. By incorporating the linearized symmetric condition (LSC) derived from Lie symmetries into PINNs, we introduce a new residual loss function that requires fewer derivatives for calculation. This integration reduces computational costs and improves solution accuracy. {Numerical simulations demonstrate a significant reduction in training time while maintaining accuracy.} Additionally, we provide a framework for identifying invariant infinitesimal generators for arbitrary Cauchy problems. This unsupervised algorithm does not require prior numerical solutions, making it practical and efficient for various applications.}

\keywords{PINNs, Lie symmetric group, Fundamental solution, Invariance principle}

%%\pacs[JEL Classification]{D8, H51}

%%\pacs[MSC Classification]{35A01, 65L10, 65L12, 65L20, 65L70}

\maketitle

\section{Introduction}\label{sec1}
% background of deep learning
{Partial differential equations (PDEs) govern numerous physical, biological, and engineering phenomena, including fluid dynamics, heat transfer, electromagnetism, and structural mechanics. Since most real-world PDEs lack closed-form solutions, numerical methods are essential for approximating their solutions. In recent years, the increasing interest in data-driven models has led to the emergence of a new field known as \textbf{scientific machine learning} (SciML), the combination of scientific computing and machine learning. One typical example of SciML is the solution of the differential equations using neural networks, dating back to 1990s~\cite{lagaris1998artificial}.}

% Popular methods to solve PDE by DL
{Several scientific machine learning methods have been developed, successfully tackling complex problems. For solving PDEs, Raissi et al. introduced \textbf{Physics-Informed Neural Networks (PINNs)} \cite{raissi2019physics}, which enforce physical laws as optimization constraints, enabling unsupervised training. As a mesh-free approach, PINNs flexibly handle complex boundaries and solve high-dimensional problems at arbitrary points. Various enhanced versions have been proposed to improve accuracy and efficiency.  Wang et al. \cite{wang2022and} introduced an adaptive weighting strategy to balance loss components based on the neural tangent kernel’s convergence rate, incorporating a causal parameter for sequential learning \cite{wang2024respecting}. McClenny et al. \cite{mcclenny2020self} developed \textbf{self-adaptive PINNs} using an adversarial mechanism to update loss weights. More recently, Anagnostopoulos et al. \cite{anagnostopoulos2024residual} proposed a \textbf{residual-based attention mechanism} with a decay parameter to regulate weight updates. For uncertainty quantification, \textbf{Bayesian-PINN} \cite{yang2021b} and \textbf{ensemble PINN} \cite{zou2024neuraluq} assess uncertainty from noisy or incomplete data. Zou et al. \cite{zou2024correcting} introduced a general approach to correct misspecified physical models in PINNs for discovering governing equations from sparse, noisy data.  Beyond PINNs, the \textbf{deep Ritz method} \cite{yu2018deep} minimizes the energy functional based on the solution's variational representation. Inspired by the Galerkin method, the \textbf{Deep Galerkin Method (DGM)} \cite{sirignano2018dgm} replaces basis functions with a neural network and employs a probability-based sampling strategy.  

}

{
% Open problem of fundamental solution
Despite the effectiveness of the aforementioned algorithms, they are limited to solving specific initial or boundary value problems. To obtain real-time responses for varying sources or boundary values, several kernel regression methods have recently been proposed, such as \textbf{graph kernel networks} \cite{anandkumar2020neural} and \textbf{Fourier neural networks} \cite{li2020fourier}.  For Cauchy initial value problems, the common mathematical foundation involves boundary integral representations using a \textbf{fundamental solution (FS)}. The solution to the PDE can be obtained by convolving the initial condition with the fundamental solution. However, solving the FS presents several challenges: (1) No explicit formula exists for the majority of PDEs. (2) Singularity points appear around the initial distribution.  
(3) The computational dimension of FS is $2n+1$, where $n$ is the state dimension. These issues lead to computational difficulties, limiting both efficiency and accuracy—especially in vanilla PINNs. Therefore, developing a fast algorithm that maintains high precision is essential.

% Lie symmetry group
To this end, we will leverage the powerful tool developed by Sophus Lie, \textbf{Lie group theory}, which can be employed to study the symmetries inherent in PDEs. By identifying these symmetries, complex PDEs can be simplified and solved systematically. Applications of Lie symmetry groups in PDEs include simplifying equations with additional first-order PDEs, finding invariant solutions, classification, structural analysis, and constructing new solutions. Recently, several promising approaches have emerged that combine symmetric properties with PINNs, particularly in the search for invariant solutions \cite{akhound2024lie, zhang2023symmetry, zhang2023enforcing}. For fundamental concepts and a technical review of their application to PDEs, we refer readers to \cite{olver1993applications}.

% Our work
In this work, we focus on solving the fundamental equation of the Cauchy problem using deep neural networks and Lie group theory. The initial $\delta$ distribution satisfies the invariance principle in a certain subspace of the symmetry groups of the system. Therefore, this symmetric property imposes an additional constraint on the first-order PDE for the fundamental solution, known as the \textbf{linearized symmetric condition (LSC)} \cite{nail2009practical, olver1993applications}. By incorporating the LSC as a new residual loss into the vanilla PINN, we shift large computational costs from the original physics residual to the first-order residual. This significantly speeds up the training process, as demonstrated by our numerical simulations. Our method not only retains the mesh-free advantage of the vanilla PINN but also offers the following contributions:

% the contribution of this paper
% \begin{enumerate} 
% \item A new symmetric residual loss is introduced to PINNs based on Lie symmetry group theory. 
% \item Numerically, our algorithm is simple and efficient and can significantly speed up the training process while enhancing the precision of the solution. Computational load from higher-order derivatives in residual loss can be partly transferred to first-order derivatives in symmetric loss.
% \item Our algorithm is unsupervised, meaning no prior training data are required. Furthermore, our method can be applied to high-dimensional PDEs, where computational acceleration is particularly significant. 
% \item A theoretical analysis is conducted to identify the symmetric subgroup and the linearized symmetric condition for arbitrary Cauchy problems. 
% \end{enumerate}
{
\begin{itemize}
    \item \textbf{Derivative-Order Reduction:} While adaptive PINNs \cite{wang2022and} optimize training through loss reweighting, we uniquely reduce the \textit{intrinsic computational complexity} of the PDE itself. By incorporating the Linearized Symmetric Condition (LSC) as a new residual term, we systematically replace high-order derivatives in the physics loss with first-order symmetry constraints (Theorem 1). This theoretically grounded approach could be further combined with adaptive weighting strategies in future work.

    \item \textbf{Deterministic Efficiency:} Unlike Bayesian-PINNs \cite{yang2021b} that focus on uncertainty quantification, our symmetry exploitation provides guaranteed computational gains. The LSC framework yields significant training acceleration while preserving accuracy (Section 4), as verified across multiple benchmark problems.

    \item \textbf{Computational Symmetry Exploitation:} Prior symmetry-based works \cite{akhound2024lie} primarily seek invariant solutions. Our key innovation lies in leveraging symmetries specifically for \textit{computational acceleration} - the LSC transfers optimization burden to first-order constraints without requiring known invariant solutions. This unsupervised approach is particularly valuable for high-dimensional problems where traditional PINNs struggle with derivative computations.
\end{itemize}

}

% organization of the paper
The remainder of the paper is organized as follows. Section 2 presents basic concepts on fundamental solutions and PINNs. A deep fundamental solution solver based on the invariance principle and PINNs will be derived in section 3. Section 4 provides several numerical examples to demonstrate the effectiveness and accuracy of the proposed algorithm.
}

\section{Basic concepts and Preliminaries}
At the beginning, some notations used in the paper will be presented for the convenience of the readers.

{\textbf{Notations:}
The set of real numbers is denoted by \(\mathbb{R}\). \(\mathbb{R}^k\) refers to the \(k\)-dimensional Euclidean space. Let \(C^{\infty}(U)\) be the set of differentiable functions defined on \(U\). \(\nabla \phi := \left(\frac{\partial \phi}{\partial x_1}, \cdots, \frac{\partial \phi}{\partial x_n}\right)^T\) denotes the gradient operator. \(\Delta \phi := \sum_{i=1}^{n} \frac{\partial^2 \phi}{\partial x_i^2}\) denotes the Laplacian operator. $\partial \Omega$ represents boundary of region $\Omega$. $\text{supp}(f)$ denotes support set of a function $f$. The inner product between two functions \(f\) and \(g\) is represented by \(\langle f, g \rangle := \int_U f g \, dx\).}

\subsection{Physics-Informed neural networks}
In the following, a general time-dependent PDE can be described as follows:
\begin{equation}\label{PDE_model}
\begin{cases}
\mathcal{L}_{t,x}[u(t,x)] = f(t, x),\quad (t,x)\in[0, T]\times\Omega\\
\mathcal{B}_{t,x}[u(t,x)] = g(t,x), \quad x\in\partial\Omega
\end{cases}
\end{equation}
{where \(\mathcal{L}_{t,x}\) and \(\mathcal{B}_{t,x}\) are two operators defining equation and boundary condition. \(u(t,x)\) is the desired exact solution. \(\Omega\) denotes the computational domain with its corresponding boundary $\partial\Omega$. $f$ and $g$ are defined as forcing source and boundary conditions. For the initial boundary value problem (IBVP), the initial condition can also be regarded as a special boundary condition.

In the context of PINNs, a DNN $u_{\theta}(t,x)$ is constructed to approximate the exact solution $u(t,x)$ where $\theta$ represents the parameters of the DNN. Compared to traditional supervised learning manner, PINNs are designed to incorporate the physical constraints in loss function with the help of automatic differentiation. More precisely, PINNs are optimized by encompassing data and physical loss.
\begin{equation}
\begin{aligned}
\mathcal{L}(\theta) =& \mathcal{L}_{data}(\theta) + \mathcal{L}_{physics}(\theta)
\end{aligned}
\end{equation}
where
\begin{equation}
\mathcal{L}_{data}(\theta) = \frac{w_d}{N_d}\sum_{i=1}^{N_d}|u_\theta(t_d^i,x_d^i) - u_d^i|^2
\end{equation}
and physical loss consists of boundary and residual loss,
\begin{equation}
\begin{aligned}
\mathcal{L}_{physics}(\theta) =& \mathcal{L}_b(\theta) + \mathcal{L}_r(\theta)\\
=& \frac{w_b}{N_b}\sum_{i=1}^{N_b}|\mathcal{B}_{t,x}[u_\theta(t_b^i,x_b^i)] - g(t_b^i,x_b^i)|^2 + \frac{w_r}{N_r}\sum_{i=1}^{N_r}|\mathcal{L}_{t,x}[u_\theta(t_r^i, x_r^i)] - f(t_r^i, x_r^i)|^2
\end{aligned}
\end{equation}
where $\{w_d, w_b, w_r\}$ are corresponding weights that can be adjusted by balancing different parts of loss values. $(t_s^i,x_s^i, u_d^i)$ are available dataset of solution with number $N_d$. $(t_b^i, x_b^i), (t_r^i, x_r^i)$ are sampled collocation points from the boundary and inner computational domain with number $N_b, N_r$. The DNN $u_\theta(t,x)$ parameterized by $\theta$ can be trained by Adam or L-BFGS optimizer.
}

\subsection{Cauchy problem}
We shall consider the Cauchy problem of the parabolic PDE taking the following form:
\begin{equation}\label{model}
\left\{
\begin{aligned}
&\pd{u}{t}(t,x) = \mathcal{N}_x u(t,x),\\
&u(0,x) = u_0(x),
\end{aligned}
\right.
\end{equation}
{where $(t,x) \in [0, T]\times \R^n$ and $\mathcal{N}_x$ denotes a linear differential operator defining the governing equation. $u_0(x)$ represents the initial condition.

In a Cauchy initial value problem (IVP) for a partial differential equation (PDE), the solution is determined by specifying initial conditions on an entire non-characteristic hypersurface (such as $t=0$ for time-dependent problems). This is fundamentally different from boundary value problems, where conditions must be imposed on the spatial boundaries. Corresponding to general form of PDE in Eq. (\ref{PDE_model}), two differential operators can be specified as
\begin{equation}
\begin{aligned}
\mathcal{L}_{t,x}[u] \overset{\Delta}{=}& \frac{\partial}{\partial t} - \mathcal{N}_x(u)\\
\mathcal{B}_{t,x}[u] =& u
\end{aligned}
\end{equation}
and $f(t,x) = 0, g(t,x) = u_0(x)$.

In order to solve the initial value problem (IVP) of Eq. (\ref{model}), in the following we shall introduce a method of fundamental solution which is a kernel method.}{
\begin{definition}[Fundamental solution]
For Cauchy problem Eq. (\ref{model}), the fundamental solution is denoted by  $K(t,x,y)$ satisfying
\begin{equation}\label{IVP_FS}
\begin{aligned}
&\pd{K}{t}(t,x,y) = \mathcal{N}_x K(t,x,y),\\
&\lim_{t\to 0}\int K(t,x,y)u_0(y)dy =  u_0(x).
\end{aligned}
\end{equation}
where $(t, x, y)\in [0,T]\times \R^n\times \R^n$. 
\end{definition}

Notice that the second equation is frequently written as $\lim_{t\to 0}K(t,x,y) = \delta(x-y)$ where $\delta$ is the usual delta function.} Fundamental solutions play a crucial role in determining the unique solution to the Cauchy problem. Once a fundamental solution is obtained, the Cauchy problem with different initial conditions can be directly solved using convolution.
\begin{equation}
u(t,x) = \int_{}K(t,x,y)u_0(y)dy
\end{equation}

% It is especially notable that if the explicit solution of \( K(t,x,y) \) at a neighborhood of \( t \in (0,\varepsilon] \) is obtained, where \( \varepsilon \) is a fixed small number, then the explicit solution for large time \( T \) can be expressed as
% {
% \begin{equation}
% \begin{aligned}
% K(T,x,y) =& \int_{x_1}\int_{x_2}\cdots\int_{x_M}K\left(\frac{T}{M},x,x_1\right)K\left(\frac{T}{M},x_1,x_2\right)\cdots\\
% & K\left(\frac{T}{M},x_M,y\right)dx_1dx_2\cdots dx_M
% \end{aligned}
% \end{equation}}
% where $M$ is the smallest integer greater than $\frac{T}{\varepsilon}$. The M-fold integrals are calculated in $x_1,x_2,\cdots,x_M$.

%###############
\section{Method}

\subsection{Invariance principle}
%In the previous section, the general theory of Lie symmetry group is introduced on the differential equation. In practice, additional initial and boundary information can be incorporated into the above general framework which leads to a critical tool as the invariance principle.

% brief introduction of Invariance principle
{In order to find inherent symmetry for the boundary value problem, boundary information should be taken into account in the Lie symmetry. The following statement plays an important role in determining the invariant condition of the boundary value problem (BVP) which is called the ``Invariance principle". }

\begin{definition}\label{IP}
Consider PDE $\mathcal{L}(x, u) = f(x)$ admitting a symmetry group $G$. The Dirichlet BVP
\begin{equation}\label{BVP}
\mathcal{L}(x, u) = f(x),\quad u|_S = h(x)
\end{equation} is said to be invariant under subgroup $H\subset G$ if 
\begin{enumerate}[(1)]
\item Boundary manifold $S$ is invariant under $H$.
\item Boundary condition $h(x)$ is invariant under $\tilde{H} := H|_S$.
\end{enumerate}
\end{definition}

{For the parabolic type Cauchy problem:
\begin{equation}\label{F-Sol}
\begin{aligned}
&K_t + \mathcal{N}_x[K](t,x,y) = 0
\end{aligned}
\end{equation}
where temporal spatial variable $(t,x)\in[0,T]\times\R^n$. The symmetry group is generated by a set of linearly independent infinitesimal generators 
\begin{equation}
G=\left<X_1, X_2, \cdots, X_m\right>
\end{equation}
{where $X_i$ represents a tangent vector field taking the following form:}
\begin{equation}
X = \tau(t,x,K)\p_t + \xi(t,x,K)\p_x + \eta(t,x,K)\p_K
\end{equation}
The symmetry group can be derived by solving the following equation
\begin{equation}\label{symmetry_equation}
pr^{(d)}(X)(\pd{K}{t} - \mathcal{N}_x[K](t,x,y)) = 0,\quad \text{when}\quad \pd{K}{t} - \mathcal{N}_x[K](t,x,y)=0
\end{equation}
where 
\begin{equation}
pr^{(d)}(X) = X + \sum_{|\alpha|\le d}\phi^{\alpha}(x, K^{(\alpha)})\partial_{K_\alpha}
\end{equation}
denotes the prolongation operator of $X$ which contains additional components of partial derivative such as $\partial_{K_t}, \partial_{K_x}, \partial_{K_{t,x}}, \cdots$. $\alpha = (\alpha_1, \alpha_2, \cdots, \alpha_n)$ denotes the multi-index. $\phi^\alpha(x, K^{\alpha})$ can be calculated by general prolongation formula.
It can be explained intuitively that the symmetry group keeps the equation invariant in its solution region. After solving equation (\ref{symmetry_equation}), we shall obtain all symmetry groups $G$ associated with Eq. (\ref{F-Sol}).

According to the invariance principle, our goal is to find a subgroup $H\subset G$ keeping the initial value problem (IVP) Eq. (\ref{IVP_FS}) to be invariant. In Eq. (\ref{IVP_FS}), boundary manifold is $S := \{t=0\}\times\R^n$. Subgroup restricted to boundary manifold is denoted as $\tilde{H} = H|_{S} = H_{\{t=0\}\times\R^n}$. Followed by the invariance principle, the subgroup we want to find satisfies two specific conditions: 
\begin{enumerate}
\item Boundary manifold $S := \{t=0\}\times\R^n$ is invariant under $H|_{S}$. Geometrically, that means $H|_{S}(S)\subset S$.
\item Boundary condition $K(0,x,y) = \delta(x - y)$ is under $H_S$. That means 
\begin{equation}
H|_S(K|_{S} - \delta(x - y))|_{K|_{S} = \delta(x - y)} = 0
\end{equation}
\end{enumerate}
}
In order to deal with the Dirac function, the usual mathematical formulation regards it as a distribution defined on differentiable function space.
\begin{definition}[Distribution]
A distribution is a linear bounded functional $f$ defined on $C_0^\infty(\R^n)$ satisfying
\begin{equation}
\langle f,a\varphi_1 + b\varphi_2\rangle = a\langle f,\varphi_1\rangle + b\langle f,\varphi_2\rangle
\end{equation}
\end{definition}

The transformation of a distribution can be defined as follows.
\begin{definition}[Transformation of Distributions]
Consider point transformation $\bar{x} = Tx$ for $x,\bar{x}\in\R^n$. Given two distributions, transformation $f\to\bar{f}$ is defined by the following equation 
\begin{equation}
\langle f(Tx), \varphi(x)\rangle = \langle\bar{f}(\bar{x}), \varphi(T^{-1}\bar{x})\rangle
\end{equation}
\end{definition}

Let $f$ be a distribution function, $T_a$ be a one-parameter transformation on $\R^n$, and $a$ is a group parameter taking small value $|a|\ll 1$. Denote $\bar{x} = T_a(x)$ with component $\bar{x}^i = x^i + a\xi^i(x) + \mathcal{O}(a^2)$. Then the infinitesimal transformation of $f$ will be (\cite{nail2009practical})
\begin{equation}
\bar{f} = f - a(\nabla\cdot\xi)f + \mathcal{O}(a^2)
\end{equation}

% For the sake of simplicity, the theoretical results we shall present will be restricted to one-dimensional spatial space, i.e., $x\in\R$. However, all results will be consistent with arbitrary spatial dimensions and extension is straightforward to implement.

{Starting from two conditions derived from the Definition \ref{IP}, we shall determine the condition generators in the subgroup should satisfy. In the following first theorem, we assume that the infinitesimal generator has the most general form. That means coefficients in front of each component depend on all variables $t,x,u$. This theorem will give us conditions that coefficients should satisfy in the most general form. The detailed proof can be found in the Appendix \ref{apd_proof}.}
\begin{theorem}\label{Main_thm}
Assume the first-order infinitesimal generator has the form
\begin{equation}
\mathbf{v} = \tau(t,x,u)\p_t + \xi(t,x,u)\p_x + \eta(t,x,u)\p_u
\end{equation}
The generator keeps the following Cauchy problem invariant
\begin{equation}
\begin{cases}
u_t + \mathcal{N}_x[u(t,x)] = 0, (t,x)\in[0,T]\times\R\\
u(0,x) = f(x), x\in\R
\end{cases}
\end{equation}
if and only if 
\begin{equation}\label{sym_coeff_cond}
\begin{cases}
\tau(0,x,u) = 0\\
\eta(0,x,f(x)) + \xi_x(0,x,f(x))\cdot f(x) = 0\\
x + a\xi(0,x,u)\in \text{supp}(f),\quad \text{if}\quad x\in \text{supp}(f)
\end{cases}
\end{equation}
where $\text{supp}(f)$ denotes support set of function $f$.
\end{theorem}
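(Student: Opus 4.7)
The plan is to split the theorem into the two clauses of the invariance principle---first, that the initial manifold $S=\{t=0\}\times\R$ is invariant under $H$, and second, that the initial data $u(0,x)=f(x)$ is invariant under the restricted subgroup $\tilde H:=H|_S$---and translate each clause into an infinitesimal condition on the coefficients $\tau,\xi,\eta$. For the first clause, the one-parameter flow of $\mathbf{v}$ sends a point $(0,x,u)\in S$ to $\bigl(a\tau(0,x,u),\,x+a\xi(0,x,u),\,u+a\eta(0,x,u)\bigr)+o(a)$, and demanding that the image remain on $\{t=0\}$ for all $(x,u)$ and all sufficiently small $a$ forces the $t$-component to vanish identically, giving exactly $\tau(0,x,u)\equiv 0$, which is the first stated condition.

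Once $\tau$ is known to vanish on $S$, the restricted generator reduces to $\tilde{\mathbf{v}}=\xi(0,x,u)\p_x+\eta(0,x,u)\p_u$, and I would next translate the invariance of the initial data, viewed as a distribution, into an infinitesimal relation. Applying Lemma \ref{lem_infi_trans} in one spatial dimension with $\bar x=x+a\xi(0,x,f(x))$ yields $\bar f(\bar x)=f(\bar x)-a\,\xi_x(0,\bar x,f(\bar x))\,f(\bar x)+o(a)$, while the $u$-coordinate transforms as $\bar u=u+a\eta(0,x,u)+o(a)$. Equating the two expansions at the level of distributions and matching the $O(a)$ coefficients delivers $\eta(0,x,f(x))+\xi_x(0,x,f(x))\,f(x)=0$, which is the second condition. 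The third condition, $x+a\xi(0,x,u)\in\text{supp}(f)$ whenever $x\in\text{supp}(f)$, arises as a complementary geometric requirement: a distribution with a distinguished support (for instance $\delta(x-y)$, whose support is the single point $y$) cannot be invariant unless its support is preserved by the underlying diffeomorphism of $\R$, so this support-preservation condition must be extracted as a separate necessary requirement.

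Finally, I would tie the two directions of the biconditional together by observing that each step above is reversible: if the three stated conditions hold, one can read them as infinitesimal versions of the two clauses of the invariance principle and conclude that the Cauchy problem is invariant under $\mathbf{v}$. The main obstacle I anticipate is the careful distributional bookkeeping in the derivation of the second condition---specifically, the need to reconcile the $u$-coordinate transform $\bar u=u+a\eta$ with the distributional transform $\bar f=f-a\xi_x f$ at the level of pairings with test functions $\varphi\in C_0^\infty(\R)$, rather than manipulating $f$ pointwise as if it were a smooth function. Using the inner-product notation of Section II and the definition of transformation of distributions, this can be done rigorously, but it is the step that requires the most care.
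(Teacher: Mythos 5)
Your proposal follows essentially the same route as the paper's proof: translating invariance of the initial manifold $S=\{t=0\}\times\R$ into $\tau(0,x,u)=0$ via the infinitesimal flow, applying Lemma \ref{lem_infi_trans} to transform the initial distribution and matching $O(a)$ terms to obtain $\eta(0,x,f(x))+\xi_x(0,x,f(x))\cdot f(x)=0$, and imposing support preservation as the separate third condition. Your explicit attention to the distributional pairing with test functions and to the reversibility of each step for the biconditional is, if anything, slightly more careful than the paper's own write-up.
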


{In the following, we shall make further assumptions on the form of coefficients of infinitesimal generators. The detailed proof can be found in the Appendix \ref{apd_proof}.}
\begin{theorem}\label{special_generator_Cauchy}
Assume the first-order infinitesimal generator has the form
\begin{equation}
\mathbf{v} = \tau(t,x)\p_t + \xi(t,x)\p_x + \phi(t,x)u\p_u
\end{equation}
The generator keeps the following Cauchy problem invariant
\begin{equation}
\begin{cases}
u_t + \mathcal{N}_x[u(t,x)] = 0, (t,x)\in[0,T]\times\R\\
u(0,x) = f(x), x\in\R
\end{cases}
\end{equation}
if and only if
\begin{equation}
\begin{cases}
\tau(0,x) = 0\\
[\phi(0,x) + \xi_x(0,x)]\Big|_{\T{supp}(f)} = 0\\
x + a\xi(0,x,u)\in supp(f),\ \text{if}\ x\in supp(f)
\end{cases}
\end{equation}
where $\text{supp}(f)$ denotes support set of function $f$.
\end{theorem}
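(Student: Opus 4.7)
The plan is to obtain Theorem \ref{special_generator_Cauchy} as a direct specialization of Theorem \ref{Main_thm}, since the generator here is simply the case in which $\eta(t,x,u) = \phi(t,x)u$ is linear in $u$ and $\tau,\xi$ are independent of $u$. So the strategy is: invoke Theorem \ref{Main_thm}, substitute the specialized coefficient functions into its three conditions, and then upgrade the resulting pointwise identity to the stated support-level condition by a short continuity argument.

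First I would apply Theorem \ref{Main_thm} with $\tau(t,x,u) = \tau(t,x)$, $\xi(t,x,u) = \xi(t,x)$, and $\eta(t,x,u) = \phi(t,x)u$. The first condition $\tau(0,x,u) = 0$ reduces immediately to $\tau(0,x) = 0$, and the third support condition is unchanged once one notes that the $u$-argument in $\xi(0,x,u)$ is vestigial for this class of generators (the statement retains this $u$ purely for notational uniformity with the general theorem). The second condition is where the substitution does real work: plugging in $\eta(0,x,f(x)) = \phi(0,x)f(x)$ and $\xi_x(0,x,f(x)) = \xi_x(0,x)$ turns
\begin{equation*}
\eta(0,x,f(x)) + \xi_x(0,x,f(x))\, f(x) = 0
\end{equation*}
into the pointwise identity
\begin{equation*}
[\phi(0,x) + \xi_x(0,x)]\, f(x) = 0, \qquad x \in \R.
\end{equation*}

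Next I would upgrade this identity to the stated $[\phi(0,x) + \xi_x(0,x)]\big|_{\T{supp}(f)} = 0$ using smoothness of $\phi$ and $\xi$. If the bracket were nonzero at some $x_0 \in \T{supp}(f)$, continuity would keep it nonzero on an open neighborhood $U \ni x_0$; but by definition of the support, $U$ must contain a point $x_1$ with $f(x_1) \neq 0$, at which both factors of the identity are nonzero, a contradiction. Conversely, vanishing of the bracket on $\T{supp}(f)$ trivially implies vanishing of the product everywhere, since $f$ vanishes off its support. This gives the equivalence in both directions.

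I do not expect a serious obstacle: the proof is essentially a substitution plus a short continuity step. The only subtlety is the treatment of boundary points of $\T{supp}(f)$, where $f$ itself may vanish; these are swept up by the continuity argument, which is precisely why the result is naturally phrased on $\T{supp}(f)$ rather than on the open set $\{f \neq 0\}$. Should one prefer to bypass Theorem \ref{Main_thm} entirely, the same conclusion can be obtained directly from Lemma \ref{lem_infi_trans} by computing the infinitesimal action of $\mathbf{v}$ on the initial distribution $f$ on $\{t=0\}$: the divergence term $\nabla\cdot\xi = \xi_x$ combines with the $\eta$-contribution $\phi u$ to give exactly the factor $\phi(0,x) + \xi_x(0,x)$, and invariance of the initial datum forces this factor to vanish on the support, with the same continuity upgrade completing the argument.
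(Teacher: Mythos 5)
Your proof is correct and takes essentially the same route as the paper's: specialize Theorem \ref{Main_thm} by substituting $\eta(t,x,u)=\phi(t,x)u$ with $\tau,\xi$ independent of $u$, which turns the second condition of (\ref{sym_coeff_cond}) into the pointwise identity $[\phi(0,x)+\xi_x(0,x)]\,f(x)=0$. The only difference is that you spell out the continuity argument upgrading this product identity to vanishing of the bracket on $\T{supp}(f)$ (handling boundary points of the support where $f$ itself vanishes), a step the paper's proof passes over with the bare assertion that ``the above identity implies the desired result''; this added detail is a genuine, if small, improvement in rigor rather than a different approach.
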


{Finally, the fundamental equation will be considered equipped with the Dirac function as the initial condition. Followed by the previous two theorems, the condition satisfied by the coefficient in the generator is derived to keep the Cauchy problem itself invariant. The detailed proof can be found in the Appendix \ref{apd_proof}.}
\begin{theorem}\label{special_generator_green}
Consider Cauchy problem of the fundamental solution
\begin{equation}\label{}
\begin{aligned}
&K_t + \mathcal{N}_x[K(t,x,y)] = 0, (t,x,y)\in[0,T]\times\R^2\\
&K(0,x,y) = \delta(x-y), (x,y)\in\R^2
\end{aligned}
\end{equation}
and generator
\begin{equation}
\mathbf{v} = \tau(t,x;y)\p_t + \xi(t,x;y)\p_x + \phi(t,x;y)u\p_u
\end{equation}
keep the above Cauchy problem invariant if and only if 
\begin{equation}
\begin{cases}
\tau(0,x;y) = 0\\
[\phi(0,x;y) + \xi_x(0,x;y)]\Big|_{x = y} = 0\\
\xi(0,y;y) = 0
\end{cases}
\end{equation}
\end{theorem}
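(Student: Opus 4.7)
The plan is to apply the invariance principle directly to the Cauchy data $u(0,x) = \delta(x-y)$, treating $y$ as a passive parameter, and extract the three conditions as the $O(a)$ requirements of distributional invariance. The condition $\tau(0,x;y) = 0$ is inherited verbatim from the invariance of the initial manifold $\{t=0\}\times\mathbb{R}$, exactly as in Theorem \ref{special_generator_Cauchy}. The new ingredient is to track how the delta distribution responds to the infinitesimal action of $\mathbf{v}$ on $(x,u)$.

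Concretely, I would write the one-parameter action as $\bar{x} = x + a\xi(0,x;y) + o(a)$ and $\bar{u} = (1 + a\phi(0,x;y))u + o(a)$, set $u(0,x) = \delta(x-y)$, and pair the transported distribution with an arbitrary test function $\varphi(\bar{x})$. Using $d\bar{x} = (1 + a\xi_x(0,x;y))\,dx$ together with the sifting property of $\delta(x-y)$, a direct computation gives
\begin{equation*}
\int \bar{u}(\bar{x})\varphi(\bar{x})\,d\bar{x} = \bigl(1 + a\phi(0,y;y) + a\xi_x(0,y;y)\bigr)\varphi\bigl(y + a\xi(0,y;y)\bigr) + o(a),
\end{equation*}
which must equal $\varphi(y) = \int \delta(\bar{x}-y)\varphi(\bar{x})\,d\bar{x}$ up to $o(a)$. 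Expanding $\varphi$ to first order and matching the coefficients of $\varphi(y)$ and $\varphi'(y)$ separately yields $\phi(0,y;y) + \xi_x(0,y;y) = 0$ and $\xi(0,y;y) = 0$, respectively. The converse direction follows by reading the same computation backwards: when all three conditions hold, every $O(a)$ contribution cancels for each test function $\varphi$, so the distributional Cauchy data is invariant, and the invariance principle then upgrades this to invariance of the full Cauchy problem.

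The main obstacle I expect is formal rather than conceptual: one must be careful with the order in which the sifting of $\delta(x-y)$ and the Taylor expansion in $a$ are applied, since $\phi(0,x;y)$ is pinned to $x=y$ by the sifting of $\delta$ itself, while $\xi_x(0,x;y)$ is pinned to $x=y$ only after writing the Jacobian factor. This is essentially the content of Lemma \ref{lem_infi_trans} specialized to a distribution supported at a single point, and once this bookkeeping is done the three conditions emerge cleanly and independently.
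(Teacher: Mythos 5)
Your proposal is correct, but it takes a more self-contained route than the paper. The paper proves this theorem as a two-line corollary of Theorem \ref{special_generator_Cauchy}: substitute $f(x) = \delta(x-y)$, note that $\mathrm{supp}(\delta) = \{x=y\}$, and read off the conditions with $y$ regarded as an external parameter; the distributional transformation law is delegated to Lemma \ref{lem_infi_trans}, and the condition $\xi(0,y;y)=0$ is inherited implicitly from the support-invariance clause of Theorem \ref{Main_thm}. You instead verify invariance of the delta data from first principles: pair the transported distribution against a test function, keep the Jacobian factor $1 + a\xi_x(0,x;y)$, sift, and match the coefficients of $\varphi(y)$ and $\varphi'(y)$. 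Your computation is right --- it reproduces exactly $\phi(0,y;y)+\xi_x(0,y;y)=0$ and $\xi(0,y;y)=0$ --- and it has two advantages: it makes explicit the distributional bookkeeping that the paper compresses into Lemma \ref{lem_infi_trans}, and it exhibits the second and third conditions as a single unified consequence of distributional invariance, with the support condition emerging from the $\varphi'(y)$ coefficient rather than being imposed as a separate requirement, as it is in the paper's chain of theorems. What the paper's route buys in exchange is brevity and generality: Theorem \ref{special_generator_Cauchy} covers arbitrary initial data $f$ with arbitrary support, of which your argument is the point-support specialization, so the paper can dispatch this theorem without redoing any analysis.
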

{
For high-dimensional problems, the corresponding result can be directly extended to
\begin{theorem}\label{special_generator_green}
Consider Cauchy problem of the fundamental solution
\begin{equation}\label{}
\begin{aligned}
&K_t + \mathcal{N}_x[K(t,x,y)] = 0, (t,x,y)\in[0,T]\times\R^n\times\R^n\\
&K(0,x,y) = \delta(x-y), (x,y)\in\R^n\times\R^n
\end{aligned}
\end{equation}
and generator
\begin{equation}
\mathbf{v} = \tau(t,x;y)\p_t + \sum_{i=1}^n\xi_i(t,x;y)\p_{x_i} + \phi(t,x;y)u\p_u
\end{equation}
keep the above Cauchy problem invariant if and only if 
\begin{equation}
\begin{cases}
\tau(0,x;y) = 0\\
[\phi(0,x;y) + \nabla\cdot\xi(0,x;y)]\Big|_{x = y} = 0\\
\xi_i(0,y;y) = 0,\quad i=1,2,\cdots,n
\end{cases}
\end{equation}
where $\xi(t,x;y):=[\xi_1(t,x;y), \xi_2(t,x;y), \cdots, \xi_n(t,x;y)]^\top$.
\end{theorem}
}
\subsection{Deep symmetric fundamental solver}
Once the invariant subgroup related to the fundamental solution is found, the following LSC condition will be satisfied by the fundamental solution $K(t,x,y)$. \cite{olver1993applications}, i.e.,
\begin{equation}
Q(t,x,y,K) \overset{\Delta}{=} \eta(t,x,y,K) - \xi(t,x,y,K)K_x - \tau(t,x,y,K)K_t = 0
\end{equation}
It can be observed that this equation only involves the first-order derivative. This inherent symmetric property satisfied by fundamental solution inspires us to take it into account in vanilla PINN architecture. The fundamental solution parametrized by a neural network is denoted as $K_\theta(t,x,y)$. In order to deal with the initial singularity numerically, a simple and common technique is to approximate it by a Gaussian distribution with small variance \cite{cortez2001method, teng2022learning}. Empirical loss function involved symmetric property is defined as
\begin{equation}
\mathcal{L}(\theta) =  
\mathcal{L}_0(\theta) + \mathcal{L}_r(\theta) + 
\mathcal{L}_s(\theta)
\end{equation}
where
\begin{equation}
\begin{aligned}
&\mathcal{L}_0(\theta) = \frac{1}{\added{N_0}}\sum_{i=1}^{N_0}|K_\theta(0,x_0^i, y_0^i) - {\rho}(x_0^i, y_0^i)|^2\\
&\mathcal{L}_r(\theta) = \frac{1}{\added{N_r}}\sum_{i=1}^{N_r}|K_{\theta,t}(t_r^i,x_r^i,y_r^i) + \mathcal{N}_x[K_\theta(t_r^i,x_r^i,y_r^i)]|^2\\
& \mathcal{L}_s(\theta) = \frac{1}{\added{N_s}}\sum_{i=1}^{N_s}|Q(t_s^i,x_s^i,y_s^i,K_\theta(t_s^i,x_s^i,y_s^i))|^2
\end{aligned}
\end{equation}
{Similar to the setup described in the introduction for the vanilla PINN, $\{x_0^i, y_0^i\}$ are the initial value points sampled from the initial condition $\{t=0\}\times\Omega$. The set $\{t_r^i, x_r^i, y_r^i\}$ denotes the residual points sampled from the interior of the region $(0, T)\times\Omega$. The set $\{t_s^i, x_s^i, y_s^i\}$ represents the symmetric collocation points sampled from the same feasible region as the residual points. $N_0$, $N_r$, and $N_s$ represent the number of collocation points for the initial condition, PDE residual, and symmetric residual, respectively. The function $\rho(x,y) := \mathcal{N}(0, \sigma I)$ defines a multi-dimensional Gaussian density with variance parameter $|\sigma|\ll 1$. }

{To ensure stable training and improve accuracy, we employ practical numerical strategies in this study. We use the Adam optimizer and L-BFGS to train the neural network, dynamically adjusting the learning rate for faster convergence. The learning rate $\eta_k$ at each iteration $k$ is determined by the formula $\eta_k = \eta_0 \times \gamma^k$, where $\eta_0$ is the initial learning rate, $\gamma$ is a regularization factor, and $\eta_k$ is the learning rate at iteration $k$. Typically, each iteration consists of 50 gradient descent steps, although this number can be adjusted as needed. Additionally, we set a predefined stopping criterion $e_{stop}$ for the training loss; once the loss reaches this threshold, training halts to save time.}

{
\begin{remark}
Our method fundamentally differs from existing PINN enhancements in both objective and mechanism: 
(i) Unlike \textit{adaptive PINNs} \cite{wang2022and} that optimize training dynamics through loss reweighting, we extended the framework of PINN by adding the a new residual term in the loss function based on symmetry, and this reduce the \textit{intrinsic computational complexity} of the PDE problem itself via derivative-order reduction. Similarly, for new residual terms, an adaptive weighting strategy can also be considered which is beyond the scope of this study;
(ii) In contrast to \textit{Bayesian-PINNs} \cite{yang2021b} that quantify uncertainty, we provide deterministic efficiency gains through symmetry exploitation; 
(iii) While prior \textit{symmetry-based approaches} \cite{akhound2024lie} focus on invariant solutions, our LSC framework uniquely targets \textit{computational acceleration} by transferring the optimization burden to first-order constraints. 
This theoretical distinction translates to consistent 20-30\% efficiency gains while maintaining accuracy, as demonstrated in Section 4.
\end{remark}}

% section 4
\section{Numerical results}
{In this section, we test the numerical simulation based on the proposed symmetric PINN of the Green kernel (GSPINN) on three typical parabolic PDEs. The first equation we considered here is the (1+1)-dimensional heat equation. The second example is the (1+1)-dimensional spatially varying diffusion equation where the coefficient of the second-order derivative depends on the spatial variable. Finally, we tested the performance of GsPINN in dealing with high-dimensional equations by using a (1+2)-dimensional heat equation. Correspondingly, the computational dimensions of the fundamental equation are 3, 3, and 5 in three examples.

As we will demonstrate, GsPINN can significantly speed up the optimization process of the vanilla PINN while maintaining higher accuracy. All simulations are implemented on a laptop with a 12th Gen Intel(R) Core(TM) i9-12900H 2.50 GHz processor.}

\begin{example}[3D FS of heat equation]
{The heat equation describes the distribution of temperature (or more generally, the distribution of a diffusing quantity) over time within a given space. In its simplest form, the (1+1)-dimensional heat equation is formulated as below:}
\begin{equation}
\pd{u}{t}(t,x) = \frac{1}{2}\pdd{}{x}u(t,x),\quad (t,x)\in[0,T]\times\R^n
\end{equation}
where $u: [0,T]\times\R\to\R$ is desired solution to be solved. Spatial and temporal dimensions are both 1.
\end{example}

By definition, its corresponding fundamental solution satisfies the equation in the following:
\begin{equation}
\begin{aligned}
&\pd{K}{t}(t,x,y) = \frac{1}{2}\pdd{K}{x}(t,x,y)\\
&K(0,x,y) = \delta(x-y), \quad(x,y)\in\R^2
\end{aligned}
\end{equation}
where $\delta(\cdot)$ denotes a multi-dimensional Dirac function. This Cauchy problem will have the exact solution which is also called the heat kernel.
\begin{equation}
\begin{aligned}
K(t,x;y)
=& (2\pi t)^{-\frac{1}{2}}\exp(-\frac{(x-y)^2}{2t})
\end{aligned}
\end{equation}{
By applying the Lie symmetry method, the following infinitesimal generator can be verified to preserve both the initial manifold and the delta function invariance.
\begin{equation}
v = 2t\frac{\p}{\p t} + x\frac{\p}{\p x} - nu\frac{\p}{\p u}
\end{equation}
Its corresponding characteristic can be obtained as
\begin{equation}
Q = -u - (x-y)\pd{u}{x} - 2t\pd{u}{t}
\end{equation}
Therefore, the invariant principle implies that the fundamental solution will have the symmetry invariant.
\begin{equation}
K(t,x,y) + (x-y)\pd{K}{x}(t,x,y) + 2t\pd{K}{t}(t,x,y) = 0
\end{equation}}
{In the following, detailed network parameters will be listed. For both PINN and GsPINN, a fully connected network is chosen with 3 inputs and 1 output and 10 hidden layers of width 20 in each layer. Initialization of network parameters will be implemented by Xavier initializer. Latin hypercubic sampling strategy will be used to produce different parts of collocation points. Different loss weights are set as $w_i=w_r = w_s = 1$. The starting learning rate is chosen as 1e-3 and will decay after each 1000 epoch. In terms of the optimizer, Adam will be first used to train followed by L-BFGS in which the maximum iteration step is set to 50000.}

\begin{figure}[!hbt]
\centering
\includegraphics[width=6cm,height=5cm]{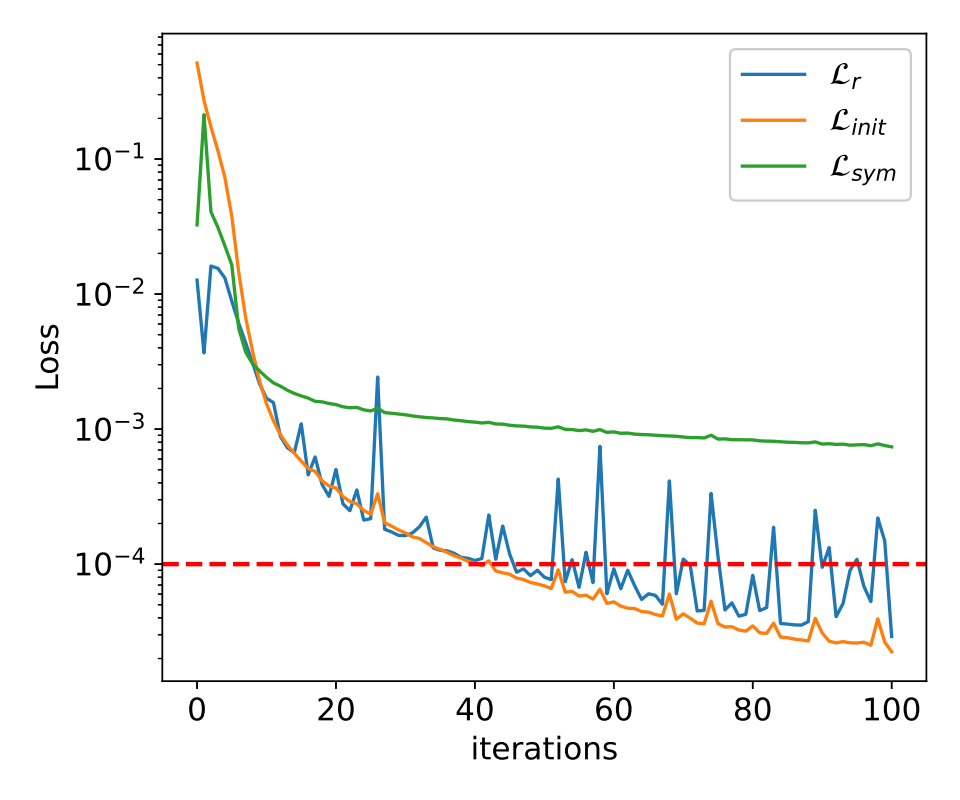}
\includegraphics[width=6cm,height=5cm]{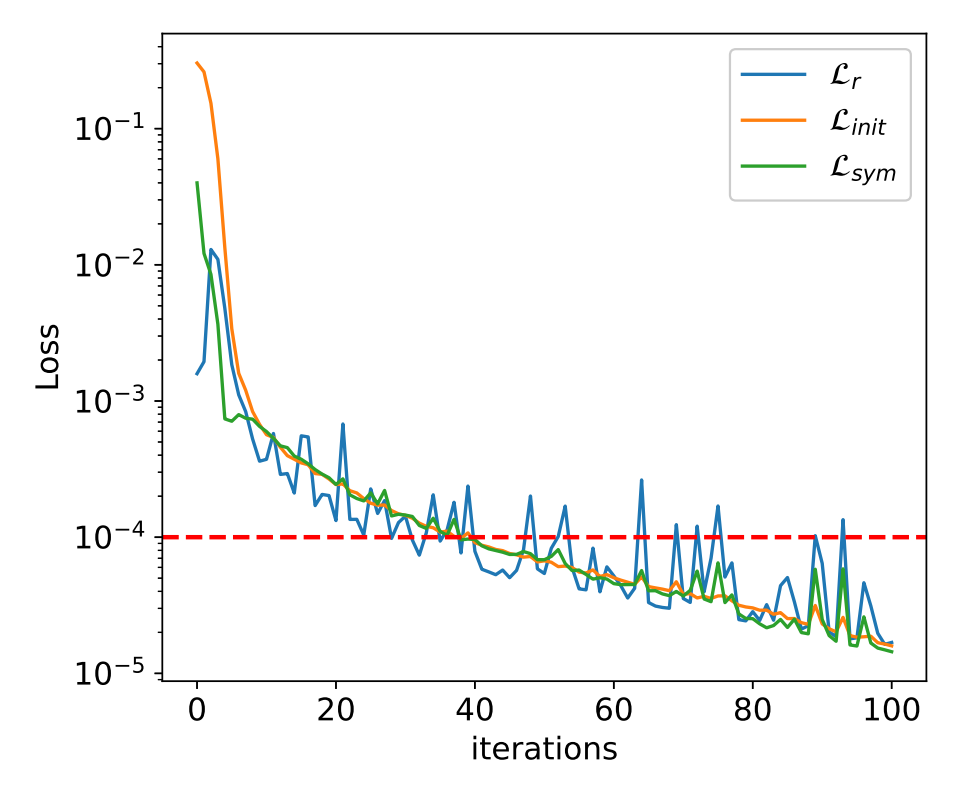}
\caption{Loss value of vanilla PINN (left) and GsPINN (right) in {Example 1}.}\label{ex1_loss}
\end{figure}

% Explain the performance of numerical simulations
{
Fig. \ref{ex1_loss} presents the evolution of three different components of the loss function. In this implementation, initial and residual collocation points are specified as 1000 and 5000 for both PINN and GsPINN. Symmetric points for GsPINN are set as 5000. It can be observed that, with the assistance of GsPINN, the newly introduced symmetry-based loss decreases rapidly and ultimately reaches a lower value. In Table \ref{ex1_table}, we conduct a systematic comparison between vanilla PINN and GsPINN, evaluating the accuracy of different loss components under various sets of collocation points. The results highlight the crucial role of GsPINN, which significantly reduces computational time while simultaneously improving accuracy. For the combination $[N_0, N_r, N_s] = [1000,5000,5000]$, computational time has been saved up to 24\% and accuracy still remains the same with PINN of the order of magnitude 1e-3.

Fig. \ref{ex1_PINN} illustrates the numerical solution at a specific time step and its comparison with the standard solution, clearly demonstrating the improved solution precision.}

\begin{table}[!hbt]
\scriptsize
\caption{Influence of number in collocation points in {Example 1}. Adam(3000) + L-BFGS, Loss weights $\lambda_i = \lambda_r = \lambda_s = 1.0$}
\label{ex1_table}
\centering
\begin{tabular}{c|c|c|c|c|c|c}
\hline
Algorithm & Loss(MSE) & Loss(init) & Loss(res) & Loss(sym) & time & points($[N_i,N_r,N_s]$) \\
\hline
PINN & 2.657e-03 & 3.562e-05 & 6.230e-05 & / & 611.5450 & [1000, 5000, 0] \\
\hline
GsPINN & \textbf{2.604e-03} & 1.309e-04 & 2.391e-04 & 8.917e-04 & 459.8366 & [1000, 5000, 5000] \\
\hline
GsPINN & 5.981e-03 & 7.560e-05 & 3.576e-04 & 2.155e-04 & \textbf{271.8132} & [1000, 3000, 5000] \\
\hline
\hline
PINN & 3.328e-03 & 3.629e-05 & 9.299e-05 & / & 515.7466 & [1000, 10000, 0] \\
\hline
GsPINN & 2.719e-03 & 1.100e-05 & 9.412e-05 & 3.192e-04 & 469.6560 & [1000, 10000, 3000] \\
\hline
GsPINN & 6.043e-03 & 9.164e-05 & 4.827e-04 & 1.628e-04 & 381.6296 & [1000, 10000, 5000] \\
\hline
GsPINN & 3.318e-03 & 3.229e-05 & 7.835e-05 & 1.890e-04 & 551.7782 & [1000, 10000, 10000] \\
\hline
GsPINN & 4.539e-03 & 2.329e-04 & 1.510e-03 & 5.336e-04 & 444.1592 & [1000, 5000, 10000] \\
\hline
GsPINN & 5.068e-03 & 2.702e-05 & 1.307e-04 & 9.137e-04 & 553.8613 & [1000, 3000, 10000] \\
\hline
\end{tabular}
\end{table}

\begin{figure}
\centering
\subfigure[]{\includegraphics[width=6cm,height=4.5cm]{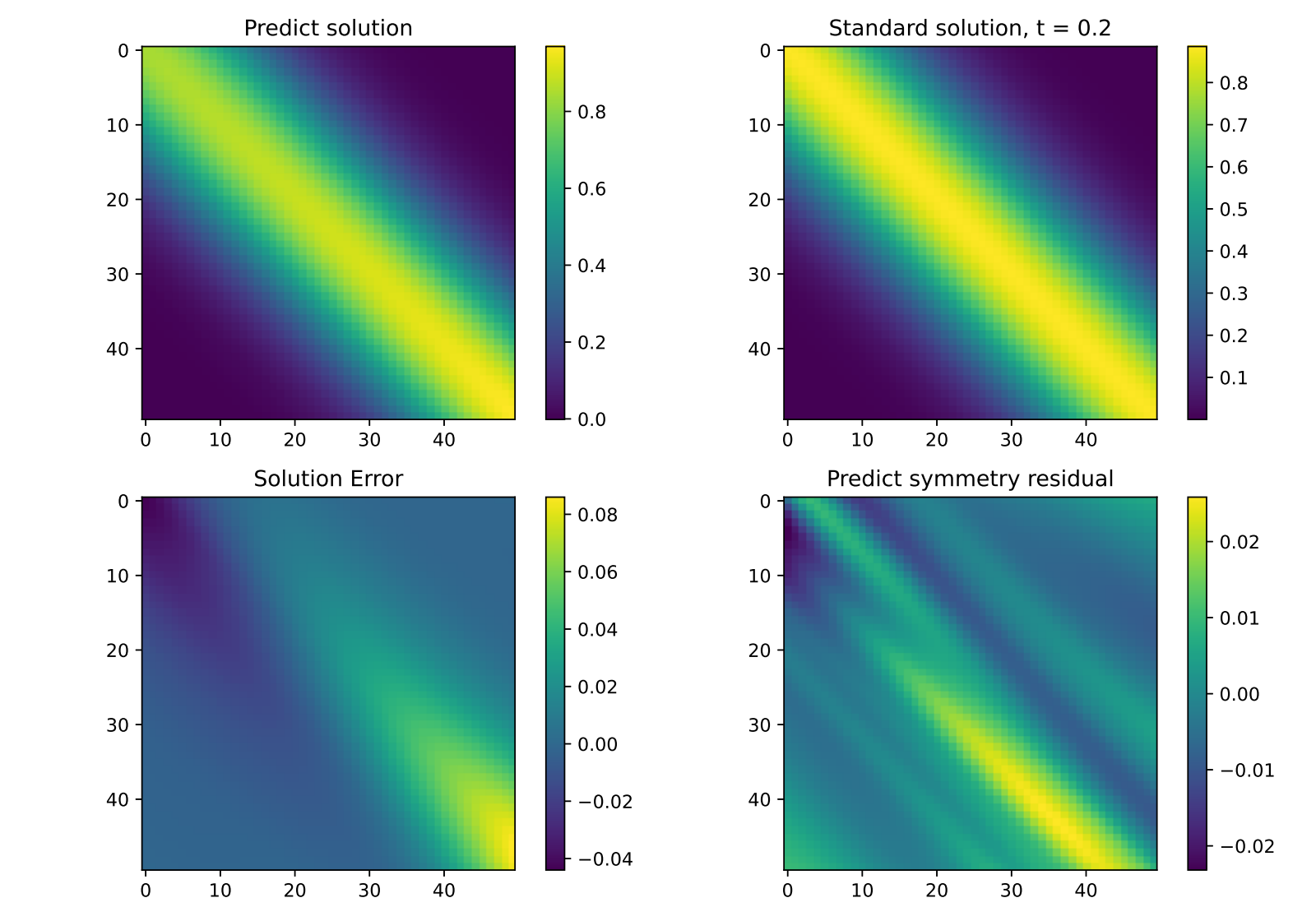}}
\subfigure[]{\includegraphics[width=6cm,height=4.5cm]{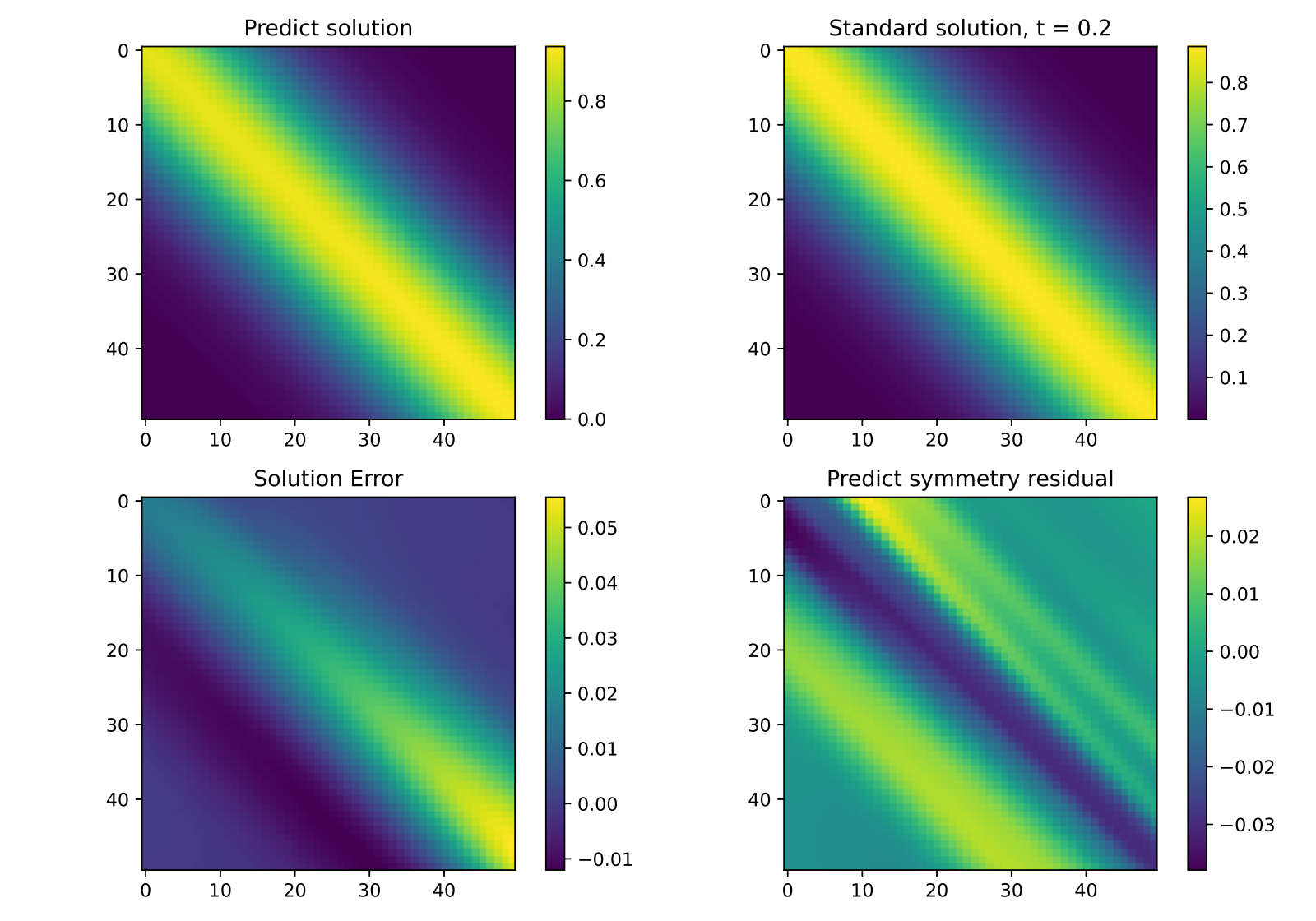}}
\caption{The comparison between PINN and GsPINN with the exact solution in {Example 1}. (a) We present the predicted solution by PINN, the exact solution, the \(L_2\) error, and the predicted symmetry residual, respectively. (b) correspond to GsPINN, with the same meaning.}\label{ex1_PINN}
\end{figure}

\begin{example}[3D FS of spatially varying diffusion equation]
{The spatially varying diffusion equation describes the process of diffusion in a medium where the diffusivity varies across space. It is a generalization of the classical heat equation and takes the form:}
\begin{equation}
\pd{u}{t}(t,x) = x\pdd{u}{x}(t,x) + b\pd{u}{x}(t,x),\quad (t,x)\in[0,T]\times\R_+
\end{equation}
where diffusion coefficient $b$ is set 0.5. Domain of spatial-temporal variable $(t,x)$ is $[0,T]\times\R_+$. $u: [0,T]\times\R_+\to\R$ is the desired solution.
\end{example}
It is easy to find diffusion equation above is linear so it is associated with a fundamental solution.
\begin{equation}
\begin{aligned}
&\pd{K}{t}(t,x;y) = x\pdd{K}{x}(t,x;y) + \frac{1}{2}\pd{K}{x}(t,x;y)\\
&K(0,x;y) = \delta(x-y)
\end{aligned}
\end{equation}
where $K: [0,T]\times\R_+\times\R_+\to\R$ is fundamental solution. The initial condition is set as a delta function which can be regarded as a point source.

This Cauchy problem admits the explicit solution:
\begin{equation}
K(t,x;y) = \frac{1}{\sqrt{\pi ty}}\exp(-\frac{x+y}{t})\cosh(\frac{2\sqrt{xy}}{t})
\end{equation}{
By applying the Lie symmetry method, the following infinitesimal generator can be verified to preserve the invariance of both the initial manifold and the delta function.}
\begin{equation}
v = t^2\frac{\p}{\p t} + 2tx\frac{\p}{\p x} - (x-y+bt)u\frac{\p}{\p u}
\end{equation}
Therefore, the invariant principle implies that the fundamental solution will have the symmetry invariant.
\begin{equation}
(x-y+\frac{1}{2}t)K(t,x,y) + t^2\pd{K}{t}(t,x,y) + 2tx\pd{K}{x}(t,x,y) = 0
\end{equation}

{In this example, the majority of detailed network parameters are the same as the first examples. The only difference is that loss weights are adjusted by using $w_i = 10, w_r = w_s = 1.0$.}

\begin{table}[!hbt]
\scriptsize
\caption{Influence of number in collocation points in Example 2. Adam(3000) + L-BFGS, Loss weights $\lambda_{i}=\lambda_{r}=\lambda_{s}=1.0$}
\label{ex2_table}
\centering
\begin{tabular}{c|c|c|c|c|c|c}
\hline
Algorithm & Loss(MSE) & Loss(init) & Loss(res) & Loss(sym) & time & points([$N_{i},N_{r},N_{s}$]) \\ 
\hline
PINN & 3.612e-03 & 1.204e-05 & 3.290e-05 & / & 581.2354 & [500, 10000, 0] \\ 
GsPINN & \textbf{3.489e-03} & 1.020e-04 & 3.672e-04 & 1.459e-04 & 386.7697 & [500, 10000, 3000] \\ 
GsPINN & 3.532e-03 & 1.000e-04 & 3.530e-04 & 1.568e-04 & 413.6504 & [500, 10000, 5000] \\ 
GsPINN & 3.668e-03 & 6.768e-05 & 3.625e-04 & 1.409e-04 & 454.1149 & [500, 10000, 10000] \\ 
GsPINN & 4.008e-03 & 5.213e-05 & 2.528e-04 & 1.531e-04 & \textbf{372.9199} & [500, 5000, 10000] \\  
\hline
PINN & 4.327e-03 & 1.038e-05 & 3.372e-05 & / & 625.3559 & [1000, 10000, 0] \\ 
GsPINN & 3.610e-03 & 1.300e-04 & 4.149e-04 & 1.626e-04 & \textbf{350.9773} & [1000, 10000, 3000] \\ 
GsPINN & \textbf{3.387e-03} & 6.989e-05 & 3.610e-04 & 1.302e-04 & 398.2439 & [1000, 10000, 5000] \\ 
GsPINN & 3.550e-03 & 4.710e-05 & 2.621e-04 & 1.275e-04 & 601.7399 & [1000, 10000, 10000] \\ 
GsPINN & 3.993e-03 & 3.854e-05 & 1.442e-04 & 1.166e-04 & 407.6808 & [1000, 5000, 10000] \\ 
GsPINN & 3.928e-03 & 5.342e-05 & 2.155e-04 & 1.038e-04 & 388.8910 & [1000, 3000, 10000] \\ 
\hline
\end{tabular}
\end{table}

\begin{figure}[!hbt]
\centering
\includegraphics[width=10cm,height=7cm]{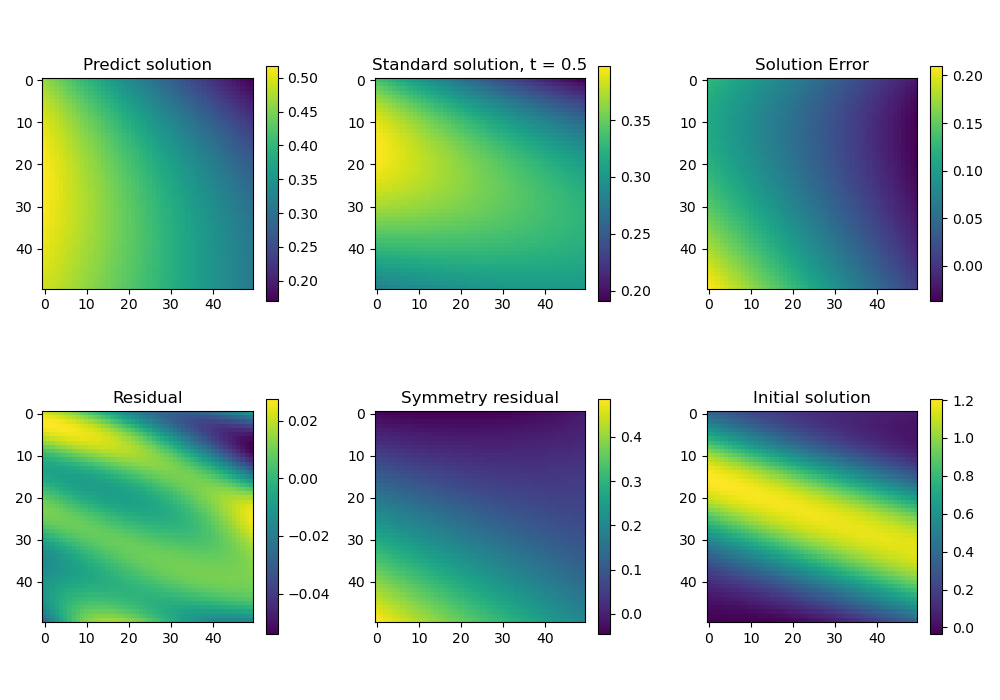}
\caption{Comparison between exact solution and PINN solution in {Example 2}.}\label{ex2_PINN}
\end{figure}

\begin{figure}[!hbt]
\centering
\includegraphics[width=10cm,height=7cm]{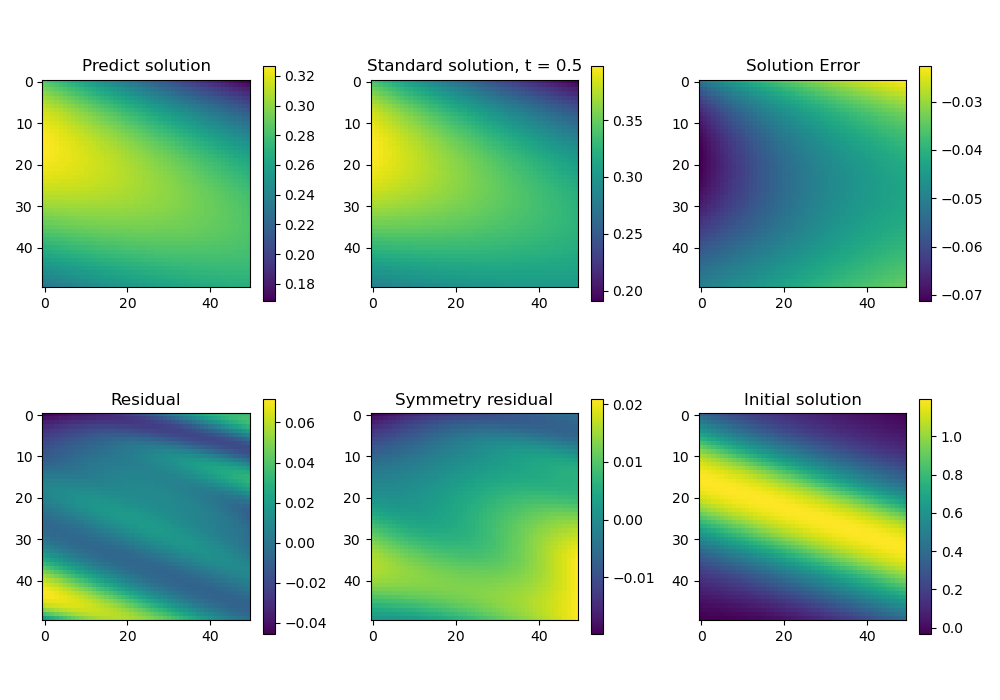}
\caption{Comparison between the exact solution and GSPINN solution in {Example 2}.}\label{ex2_GsPINN}
\end{figure}

\begin{figure}[!hbt]
\centering
\includegraphics[width=6cm,height=5cm]{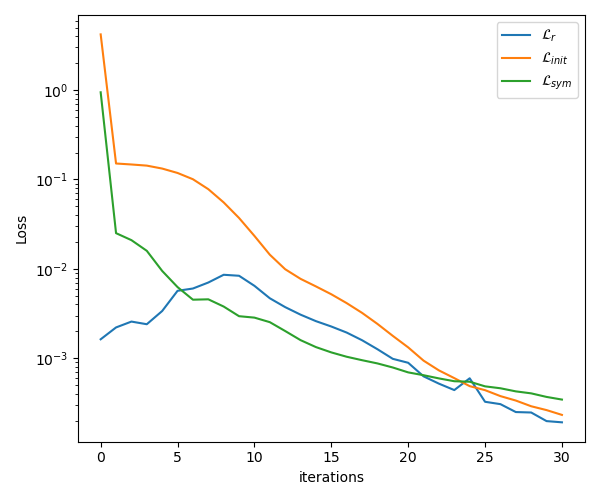}
\includegraphics[width=6cm,height=5cm]{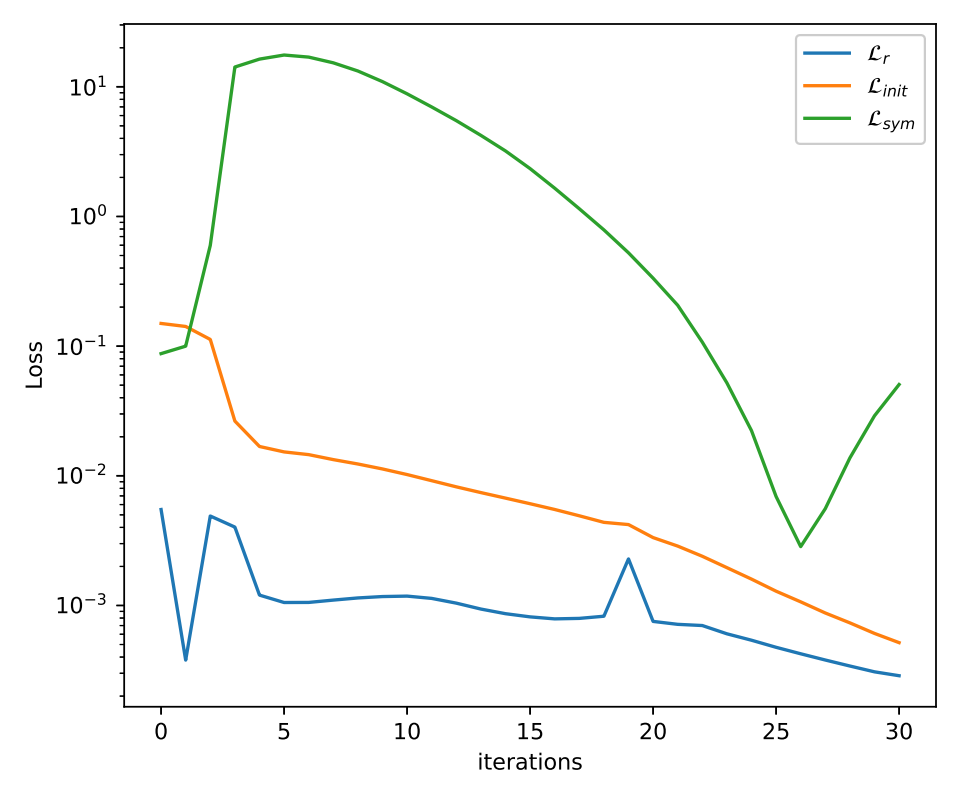} 
\caption{Loss value of GsPINN (left) and vanilla PINN (right) in {Example 2}.}\label{ex2_loss}
\end{figure}

Fig. \ref{ex2_loss} presents the loss components across three distinct categories. {In this implementation, initial and residual collocation points are specified as 500 and 10000 for both PINN and GsPINN. Symmetric points for GsPINN are set as 3000.} The results show that with the incorporation of GsPINN, the symmetry-related loss decreases rapidly and reaches a lower value compared to the others. Table \ref{ex2_table} systematically compares the accuracy of different loss components between vanilla PINN and GsPINN under varying collocation points, highlighting GsPINN’s ability to significantly reduce computational time while simultaneously improving accuracy. {For the combination $[N_0, N_r, N_s] = [500,10000,3000]$, computational time has been saved up to 33.5\% and accuracy still remains the same with PINN of the order of magnitude 1e-3.} Fig. \ref{ex2_PINN} and Fig. \ref{ex2_GsPINN} illustrate the numerical solutions at a specific time step alongside the standard solution, clearly demonstrating the improved precision achieved by our method.

{
\begin{example}[5D FS of heat equation]
The (1+2)-dimensional heat equation considered here is formulated as below:
\begin{equation}
\pd{u}{t}(t,x) = \frac{1}{2}(\pdd{}{x_1}u(t,x) + \pdd{}{x_2}u(t,x)),\quad (t,x)\in[0,T]\times\R^n
\end{equation}
where $u: [0,T]\times\R^2\to\R$ is desired solution to be solved. Spatial and temporal dimensions are $2$ and 1 respectively.
\end{example}

By definition, its corresponding fundamental solution satisfies the equation in the following:
\begin{equation}
\begin{aligned}
&\pd{K}{t}(t,x,y) = \frac{1}{2}(\pdd{K(t,x,y)}{x_1} + \pdd{K(t,x,y)}{x_2})\\
&K(0,x,y) = \delta(x-y), \quad(x,y)\in\R^2\times\R^2
\end{aligned}
\end{equation}

where $\delta(\cdot)$ denotes a multi-dimensional Dirac function. This Cauchy problem will have the exact solution which is also called the heat kernel.
\begin{equation}
\begin{aligned}
K(t,x,y)
=& (2\pi t)^{-1}\exp\left(-\frac{(x_1-y_1)^2 + (x_2-y_2)^2}{2t}\right)
\end{aligned}
\end{equation}
By applying the Lie symmetry method, the following infinitesimal generator can be verified to keep the initial manifold and delta function invariant.
\begin{equation}
v = 2t\p_t + (x_1\p_{x_1} + x_2\p_{x_2}) - nu\p_u
\end{equation}
Its corresponding characteristic can be obtained as
\begin{equation}
Q = -2u - [(x_1-y_1)\p_{x_1}+(x_2-y_2)\p_{x_2}]u - 2t\p_t u
\end{equation}
Therefore invariant principle implies that the fundamental solution will have the symmetry invariant.
\begin{equation}
2K(t,x,y) + [(x_1-y_1)\pd{}{x_1} + (x_2-y_2)\pd{}{x_2}]K(t,x,y) + 2t\pd{}{t} K(t,x,y) = 0
\end{equation}
}

{In this example, the architecture of a neural network is designed to have 5 input neurons 1 output neuron, and 10 hidden layers with 20 neurons in each layer. Besides, loss weights in this example are set as $w_i = w_s = 1.0$ and $w_r = 5.0$. The other network parameters related to training are the same as the first two examples}

In Table 3, we conduct a more systematic simulation to compare the vanilla PINN and GsPINN in terms of loss precision across different sets of collocation points. The table highlights the significant advantage of GsPINN which can take into account the symmetric information. Computational time has been saved up to 21\% and accuracy still remains the same with PINN of the order of magnitude 1e-4. Symmetry condition inspired loss component plays an remarkable role in dealing with fundamental solution compared to vanilla PINNs. 

\begin{table}[h]\scriptsize
\centering
\caption{Comparison of Different Algorithms in Example 3 (Selected Results)}\label{tab:comparison3}
\begin{tabular}{c|c|c|c|c|c|c}
\hline
Algorithm & L2 error & Loss(init) & Loss(res) & Loss(sym) & Time (s) & [$N_0, N_r, N_s$] \\
\hline
PINN & 9.824e-04 & 3.035e-04 & 1.248e-04 & / & 384.1212 & [2000, 10000, 0] \\
GspINN & \textbf{6.493e-04} & \textbf{2.170e-04} & 7.026e-05 & \textbf{9.006e-04} & \textbf{304.2987} & [3000, 10000, 10000] \\
\hline
\end{tabular}
\end{table}

\section{Conclusion}
In this paper, we aim to propose a novel and fast algorithm to approximate the fundamental
solution of linear differential equation. The motivation of this paper is to utilize the Lie group theory combined with a Physics-Informed neural network to achieve the speedup of computation. First of all, we shall compute the symmetry group admitted by the fundamental equation without any initial condition. After obtaining a symmetry group, based on the invariance principle, a subgroup will be determined to keep the initial manifold and distribution unchanged. Corresponding conditions that should be satisfied have been present in the form of Theorems in section 3. Based on the subgroup, a linearized symmetric condition (LSC) can be derived. Then LSC is applied in the architecture of PINN to derive our novel algorithm termed GsPINN. Detailed numerical experiments are present including a (1+2)-dimensional spatially-varying diffusion equation and (1+4)-dimensional heat equation. Our novel algorithm has several advantages compared to vanilla PINN. Numerically, GsPINN can significantly speed up the network training procedure while keeping even enhancing the precision of the solution. GsPINN exhibits the capability to effectively handle high-dimensional fundamental equation. Furthermore, our novel algorithm has strong theoretical support to identify invariant subgroups.

%\section{Data Availability Statement}  
%The demo code supporting this study will be made available if requested.

\section{Acknowledgement}
This work was supported by the National Natural Science Foundation of China (No.12501613, No. 42404128)

\begin{appendices}

\section{Additional background of group theory}\label{secA1}
Next, we shall introduce some elementary knowledge about group theory and Lie algebra theory. In order to be suitable for more general readers, concrete examples will be illustrated under each abstract definition. 

As a beginning, an abstract formulation of the group will be given as a most fundamental concept.

\begin{definition}[Group]
A group $G$ is a set equipped with an binary operation $\circ: G\times G\to G$ satisfying following property:
\begin{enumerate}
\item (identity) There exists identity element $e$ such that $e\circ g = g\circ e = g$ holds for any $g\in G$.
\item (associativity) For any elements $g, h, w\in G$, $(g\circ h)\circ w = g\circ (h\circ w)$ holds.
\item (inverse) For any element $g\in G$, there exists element $g^{-1}\in G$ satisfying $g\circ g^{-1} = g^{-1}\circ g = e$.
\end{enumerate}
\end{definition}
\begin{example}
The most common cases of group include $(\mathbb{Z}, +), (\mathbb{Q}\backslash\{0\}, *)$.
\end{example}

\begin{definition}[Lie group]
A Lie group is a smooth manifold associated with a group structure such that group operations are smooth. More precisely, group multiplication $\circ: G\times G\to G$ and inversion mapping are needed to be differentiable.
\end{definition}
\begin{example}
Consider $GL(n,\R)$ is the set of all invertible matrices of size $n\times n$ equipped with usual matrix multiplication. Notice that each component of multiplication and inversion are differentiable so that $(GL(n,\R), \circ)$ is a Lie group.
\end{example}

\begin{definition}[Group action]
Let $G$ be a Lie group. A group action is defined as a mapping $\cdot: G\times M\to M$ on manifold $M$ which satisfies
\begin{enumerate}
\item (Identity) $e\cdot a = a$ holds for any $a\in M$.
\item (Compatibility) $g\cdot(h\cdot a) = (g\circ h)\cdot a$ holds for any $g,h\in G$ and $a\in M$.
\end{enumerate}
\end{definition}

Next, we give some basic concepts related to Lie algebra.
\begin{definition}
If $X$ and $Y$ are differential operators, the Lie bracket of $X$ and $Y$, $[X, Y]$, is defined by $[X, Y]\phi = X(Y\phi) - Y(X\phi)$ for any $C^\infty$ function $\phi$.
\end{definition}

\begin{definition}
A vector space $\mathcal{F}$ with the Lie bracket operation $\mathcal{F}\times \mathcal{F}\rightarrow \mathcal{F}$ denoted by $(x, y)\longmapsto [x, y]$ is called a Lie algebra if the following axioms are satisfied

{\rm (1)} The Lie bracket operation is bilinear;

{\rm (2)} $[x, x]=0$ for all $x\in\mathcal{F}$;

{\rm (3)} $[x, [y, z]] + [y, [z, x]] + [z, [x, y]] = 0,\quad x, y, z\in\mathcal{F}$.
\end{definition}

\subsection{Lie symmetry method}

Lie group, as a powerful tool in mathematics, originated from the study of the symmetric property of solution set, especially for the solution of PDE. We shall consider a general form PDE system $\mathcal{L}(x, u) = 0$ with its solution space denoted as $\T{Sol}(\mathcal{L})$. Here $x\in\R^p$ is independent coordinates, $u = u(x)$ is the unique solution and $\mathcal{L}$ represents a differential operator.

\textbf{Reformulation of PDE system} In the following, another viewpoint of PDE will be introduced with the assumption of order $n$. First in order to describe all derivative variables of the solution, jet space $J^{(n)}$ is introduced as $J^{(n)}:= X\times U\times U_1\times \cdots\times U_n$. Here $X = \R^p$ is domain of independent variable $x$, and $$U_i := \left\{\frac{\p^i u}{\p x_{j_1}\p x_{j_2}\cdots \p x_{j_p}}: j_1+j_2+\cdots j_p = i\right\}$$ represent set of all $i$-order partial derivative. 

Therefore, solution space $\T{Sol}(\mathcal{L})$ can be understood as a subvariety $S_\Delta := \{(x, u^{(n)})\in J^{(n)}: \mathcal{A}(x,u^{(n)}) = 0\}$, where $\mathcal{A}: J^{(n)}\to\R$ is a smooth function mapping. A simple example can be shown if considering the Laplace equation $u_{x_1x_1} + u_{x_2x_2} = 0$ with independent variable space $\R^2$. Correspondingly independent variable $x = (x_1, x_2)$, and jet expansion of dependent variable $u^{(2)} = (u, u_{x_1}, u_{x_2}, u_{x_1x_1}, u_{x_1x_2}, u_{x_2x_2})\in\R^6$. Smooth mapping $\mathcal{A}: (x, u^{(2)})\mapsto u_{x_1x_1} + u_{x_2x_2}$ is an affine function. Next in order to describe symmetric transformation, a symmetry group is introduced.
\begin{definition}[Symmetry group]
A symmetry group related to the PDE system refers to a local group action $G\curvearrowright X\times U$ which satisfies for any $g\in G$, $u = g\cdot f\in\T{Sol}(\mathcal{A})$ if $u = f(x)\in\T{Sol}(\mathcal{A})$. Elements in symmetry group are usually called transformations.
\end{definition}
It is noted that a transformation has nothing to do with derivative of a solution $f$. In order to extend the action to the jet space, we shall introduce a concept of prolongation. First of all, the following simple example is shown to illustrate the transformation on a concrete function.
\begin{example}
Consider Lie group $SO(2)$ describing the set of rotation matrice of size $2\times 2$. The function mentioned here is an affine polynomial $u = ax + b$. Next transformed function with a rotation $\theta$ can be written down
\begin{equation}
\tilde{u} = \frac{\sin\theta + a\cos\theta}{\cos\theta - a\sin\theta}\tilde{x} + \frac{b}{\cos\theta - a\sin\theta}
\end{equation}
\end{example}

The notion of prolongation is motivated by extending the domain of a group action such that we can simultaneously make transformation of tangent space.

\begin{definition}[Prolongation of Group action]\label{prolong_GA}
let $M$ be an open set in $X\times U$ and $M^{(n)}:=M\times U_1\times \cdots\times U_n$. Given a function $u = u(x)$ with corresponding graph defined in $M$, the prolongation of group transformation $g$ is defined as
\begin{equation}
\T{pr}^{(n)}g\cdot (x, u^{(n)}) := (\tilde{x}, \tilde{u}^{(n)})
\end{equation}
where
\begin{equation}
\tilde{u}^{(n)} := \T{pr}^{(n)}(g\cdot u)(\tilde{x})
\end{equation} 
\end{definition}
From the above definition, prolongation of group action is based on the prolongation of a single function. The main consideration lies in first applying $g$ to transform the function $u$ then obtaining derivative from new transformed function at the point $\tilde{x}$.

After finishing the prolongation of group action, corresponding infinitesimal generator can be also prolonged to high order jet space. Let $v$ be a infinitesimal generator defined on $M\subset X\times U$ with one-parameter flow $\exp(\varepsilon v)$. Infinitesimal generators we considered here are all differentiable. This leads to an equivalent relation between infinitesimal generator and one-parameter flow. The motivation here is any smooth infinitesimal generator $v$ will induce a differentiable flow $\psi(\epsilon, x) = \exp(\epsilon v)x$ in which group action $\exp(\varepsilon v)$ can be detached simultaneously. One-parameter group action can be prolonged by applying method defined in Definition \ref{prolong_GA}. This new prolonged action will correspond to a new flow followed by a new prolonged infinitesimal generator $\T{pr}^{(n)v}$, i.e.,
\begin{equation}
pr^{(n)}v = \frac{d}{d\varepsilon}\Big|_{\varepsilon=0}pr^{(n)}\exp(\varepsilon v)(x, u^{(n)})
\end{equation}

In practice, a general prolongation formula can be applied to extend infinitesimal generator to arbitrary order. For the sake of simplicity, we shall only consider the case of $q = 1$. 
\begin{proposition}[General prolongation formula]
Let 
\begin{equation}
v = \sum_{i=1}^{p}\xi^i(x,u)\p_{x_i} + \phi(x,u)\p_{u}
\end{equation}
be a zero-order infinitesimal generator. Its corresponding prolongation of order $n$ can be written as 
\begin{equation}
pr^{(n)}v = v + \sum_{|J|\le n}\phi^{J}(x, u^{(n)})\p_{u_J}
\end{equation}
where component $\phi^{J}(x, u^{(n)}) = D_J(\phi - \sum_{i=1}^{p}\xi^i\p_{x_i}(u)) + \sum_{i=1}^{p}\xi^i\p_{x_i}(u_J)$, $D_J$ denotes total derivative.
\end{proposition}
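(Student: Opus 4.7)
The plan is to derive the general prolongation formula by decomposing $v$ into an evolutionary (vertical) part and a horizontal (trivial) part, prolonging each separately, and recombining via linearity of the prolongation operator.

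First, I would introduce the characteristic $Q := \phi - \sum_{i=1}^{p}\xi^i u_{x_i}$ and the associated evolutionary generator $v_Q := Q\,\p_u$. The key algebraic identity is
\begin{equation}
v = v_Q + v_H,\qquad v_H := \sum_{i=1}^{p}\xi^i\bigl(\p_{x_i} + u_{x_i}\p_u\bigr),
\end{equation}
where the horizontal piece $v_H$ is tangent to every section $u = u(x)$ and therefore generates a trivial reparametrization of the graph. Because prolongation is linear on vector fields, $\T{pr}^{(n)}v = \T{pr}^{(n)}v_Q + \T{pr}^{(n)}v_H$, and it suffices to evaluate each summand on every jet coordinate $u_J$ with $|J|\le n$.

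Second, for the evolutionary piece the flow $\exp(\varepsilon v_Q)$ fixes the base coordinates and sends $u \mapsto u + \varepsilon Q + o(\varepsilon)$. Since differentiation in $x$ commutes with a flow that leaves $x$ alone, each jet coordinate transforms as $u_J \mapsto u_J + \varepsilon\,D_J Q + o(\varepsilon)$; differentiating at $\varepsilon=0$ gives $\T{pr}^{(n)}v_Q = \sum_{|J|\le n}(D_J Q)\,\p_{u_J}$. For the horizontal piece I would establish that $\T{pr}^{(n)}v_H$ acts on each coordinate $u_J$ by $\sum_{i=1}^{p}\xi^i u_{J,i}$, where $u_{J,i}$ denotes the coordinate obtained by appending an $x_i$-index to $J$. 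Summing the two contributions yields
\begin{equation}
\phi^J \;=\; D_J Q + \sum_{i=1}^{p}\xi^i u_{J,i} \;=\; D_J\!\Bigl(\phi - \sum_{i=1}^{p}\xi^i u_{x_i}\Bigr) + \sum_{i=1}^{p}\xi^i u_{J,i},
\end{equation}
which is the stated formula.

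The main obstacle is the rigorous justification of the horizontal-part claim. I would proceed by induction on $|J|$: the case $|J|=0$ is immediate since $v_H(u) = \sum_i \xi^i u_{x_i}$; for the inductive step, one differentiates the flow-transformed section with respect to the additional variable and applies the chain rule, using that the truncated total derivative $D_i$ on jet space commutes with $D_J$ as derivations and obeys the Leibniz rule, so that extending the multi-index by $i$ corresponds exactly to applying $D_i$ under the prolonged action. This combinatorial bookkeeping is the only delicate part; everything else reduces to linearity of $\T{pr}^{(n)}$ and standard chain-rule computations, after which the proposition follows by adding the two contributions.
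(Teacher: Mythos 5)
You should first know that the paper contains no proof of this proposition: it is imported background (Olver's general prolongation formula), stated with a citation to \cite{olver1993applications} and illustrated only by the worked example $v=-u\p_x+x\p_u$, so the comparison is with the standard reference proof, which is a direct first-order chain-rule computation (equivalently, induction on $|J|$ with the Jacobian of $\tilde{x}=x+\varepsilon\xi$ expanded to $O(\varepsilon)$). Your route through the decomposition $v=v_Q+v_H$ is a genuinely different and viable organization --- it is how the formula is usually recast \emph{a posteriori} as $\T{pr}\,v=\T{pr}\,v_Q+\sum_i\xi^iD_i$ --- and your two flow computations are individually right: the vertical flow fixes $x$ and transports $u_J\mapsto u_J+\varepsilon D_JQ$ because $Q$ is differentiated along the section (hence by total, not partial, derivatives), and the horizontal flow slides the graph along itself, giving $u_J\mapsto u_J+\varepsilon\sum_i\xi^iu_{J,i}$.

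Two points need repair before this is rigorous. First, your appeal to ``linearity of prolongation on vector fields'' is only available for point vector fields, whereas $v_Q=Q\,\p_u$ and $v_H=\sum_i\xi^i(\p_{x_i}+u_{x_i}\p_u)$ both have coefficients depending on $u_{x_i}$: they are \emph{generalized} vector fields, for which the prolongation as defined in the paper (via prolonged group actions on $X\times U$) is not even defined, let alone additive, so the step $\T{pr}^{(n)}v=\T{pr}^{(n)}v_Q+\T{pr}^{(n)}v_H$ cannot be invoked as stated. The clean fix makes the decomposition and the additivity simultaneous consequences of a single expansion: from $\tilde{x}=x+\varepsilon\xi$ and $\tilde{u}(\tilde{x})=u(x)+\varepsilon\phi$ one solves $\tilde{u}(x)=u(x)+\varepsilon Q(x,u^{(1)}(x))+O(\varepsilon^2)$, and then
\begin{equation}
\p_J\tilde{u}(\tilde{x})=u_J+\varepsilon\Bigl(D_JQ+\sum_{i=1}^{p}\xi^iu_{J,i}\Bigr)+O(\varepsilon^2),
\end{equation}
which yields the proposition in one stroke and also renders your induction for the horizontal part unnecessary. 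Second, mind the truncation: for $|J|=n$ each of $D_JQ$ and $\sum_i\xi^iu_{J,i}$ separately involves the order-$(n+1)$ coordinates $u_{J,i}$, so neither putative summand restricts to $J^{(n)}$ on its own; only after the top-order terms cancel in the sum (Leibniz expansion of $D_J(\xi^iu_{x_i})$) does $\phi^J$ become a function of $u^{(n)}$, which is exactly what makes the stated formula well defined on the $n$-th jet space. Your argument should state this cancellation explicitly, since the two ``separate prolongations'' otherwise live only on the infinite jet.
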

\textbf{Difference between partial and total derivative.}

\begin{example}
Let infinitesimal generator defined as $v = -u\p_x + x\p_u$ with corresponding $\xi = -u$ and $\phi = x$. By general prolongation formula, $pr^{(2)}v = v + \phi^x\p_{u_x} + \phi^{xx}\p_{u_{xx}}$ where components $\phi^x, \phi^{xx}$ are calculated as below,
\begin{equation}
\begin{aligned}
\phi^x =& D_x(x + uu_x) - uu_{xx}\\
=& 1 + u_x^2 + uu_{xx} - uu_{xx}\\
=& 1 + u_x^2
\end{aligned}
\end{equation}
and
\begin{equation}
\begin{aligned}
\phi^{xx} =& D_{xx}(x + uu_x) + (-u)u_{xxx}\\
=& D_x(1 + u_x^2 + uu_{xx}) - uu_{xxx}\\
=& 3u_xu_{xx}
\end{aligned}
\end{equation}
\end{example}
Up to now, we can calculate prolonged infinitesimal generators of arbitrary order which can allow us to obtain the following linearized symmetry condition (LSC).
\begin{definition}[LSC]
For a prolonged infinitesimal generator,
\begin{equation}
X = \xi\p_x + \tau\p_t + \eta\p_u + \eta^x\p_{u_x} + \eta^t\p_{u_t} + \cdots
\end{equation}
Linearized symmetry condition states $X(\mathcal{A}(x, u^{(n)}))$ will vanish at the solution set $\mathcal{A}(x, u^{(n)}) = 0$, i.e.,
\begin{equation}
X(\mathcal{A}(x, u^{(n)})) = 0,\quad\T{when } \mathcal{A}(x, u^{(n)}) = 0
\end{equation}
\end{definition}
Solving LSC will yield a set of linearly independent infinitesimal generators $\{v_i\},i\ge 1$ which span a Lie algebra named symmetry algebra. There is a one-to-one correspondence between symmetry algebra and symmetry group.

\section{Detailed Proofs}\label{apd_proof}

Here, some detailed proof will be provided.

\textbf{Proof of Theorem \ref{Main_thm}.}

Notice that the initial manifold here is $S = \{t=0\}\times\R$. First requirement of invariance principle is that $v(t)\Big|_S = 0$, where $(\cdot)$ means a functional action over function $t$. It implies component should be satisfied by $\tau(0,x,u) = 0$. Followed by this requirement, infinitesimal generator $\mathbf{v}$ restricted on initial manifold will become
\begin{equation}
\mathbf{v}\vl_S = \xi(0,x,u)\p_x + \eta(0,x,u)\p_u
\end{equation}
The second requirement of invariance principle is stated as $\mathbf{v}\vl_S(u - f)\vl_{u = f} = 0$. By definition of infinitesimal transformation, we shall derive $\bar{x} = x + a\xi(0,x,u) + \mathcal{O}(a^2)$ and $\bar{u} = u + a\eta(0,x,u) + \mathcal{O}(a^2)$. By utilizing Lemma \ref{lem_infi_trans}, initial distribution will be transformed as $\bar{f} = f - a\xi_x(0,x,u)\cdot f$. It will yield that 
\begin{equation}
\begin{aligned}
\mathbf{v}\vl_S(u - f)\vl_{u = f} &= \mathbf{v}\vl_S(u)\vl_{u = f} - \mathbf{v}\vl_S(f)\vl_{u = f}\\
&= [\bar{u} - \bar{f}]\vl_{u = f}\\
&= a[(\eta(0,x,u) + \xi_x(0,x,u)\cdot f)]\vl_{u = f} + \mathcal{O}(a^2)\\
&= a[\eta(0,x,f(x)) + \xi_x(0,x,f(x))\cdot f(x)] + \mathcal{O}(a^2)
\end{aligned}
\end{equation}
Finally, $\mathbf{v}\vl_S(u - f)$ vanishes at the $u=f$ is equivalent to $\eta(0,x,f(x)) + \xi_x(0,x,f(x))\cdot f(x) = 0$ for $x\in\R$.

Finally it should be noted that symmetry group should keep support of initial distribution unchanged to satisfy the invariance principle. Considering generator
\begin{equation}
v\vl_{t=0} = \xi(0,x,u)\partial_x + \eta(0,x,u)\partial_u
\end{equation}
is corresponding to transformation $g: \R\to\R$ by $x\mapsto\bar{x}$ under infinitesimal transformation. Invariance of support is equivalent to $g(supp(f))\subset supp(f)$, i.e.,
\begin{equation}
x + a\xi(0,x,u)\in supp(f),\ \text{if}\ x\in supp(f)
\end{equation}
\hfill$\square$

\textbf{Proof of Theorem \ref{special_generator_Cauchy}.}

By applying Theorem \ref{Main_thm}, we can easily obtain $\tau(0,x) = 0$ from the first equation (\ref{sym_coeff_cond}). Furthermore, starting from second identity of (\ref{sym_coeff_cond}), 
\begin{equation}
\begin{aligned}
\eta(0, x, f(x)) + \xi_x(0,x)\cdot f(x) &= \phi(0,x)f(x) + \xi_x(0,x)\cdot f(x)\\
&= [\phi(0,x) + \xi_x(0,x)]f(x)\\
&= 0
\end{aligned}
\end{equation}
the above identity implies the desired result.
\hfill$\square$

\textbf{Proof of Theorem \ref{special_generator_green}.}

The only thing we need to do is to replace $f(x)$ with $\delta(x-y)$ and notice $\T{supp}(\delta) = \{x = y\}$. Results of Theorem \ref{special_generator_Cauchy} correspond to the following
\begin{equation}
\begin{cases}
\tau(0,x;y) = 0\\
[\phi(0,x;y) + \xi_x(0,x;y)]\Big|_{\T{supp}(\delta)} = 0
\end{cases}
\end{equation}
where $y$ should be regarded as an external parameter.
\hfill$\square$

\textbf{Derivation of symmetric condition for heat equation}

Recall the heat equation,
\begin{equation}
u_t -  u_{xx} = 0
\end{equation}
The corresponding symmetry group can be obtained by routing procedure mentioned in previous section.
\begin{equation}
\begin{aligned}
&\mathbf{v}_0 = \p_t,\quad \mathbf{v}_1 = \p_x,\\
&\mathbf{v}_2 = 2t\p_t + x\p_{x},\quad \mathbf{v}_3 = u\p_u,\quad \mathbf{v}_4 = 2t\p_{x} - xu\p_u\\
&\mathbf{v}_5 = t^2\p_t + tx\p_{x} - \frac{1}{4}(2nt + x^2)u\p_u
\end{aligned}
\end{equation}
\begin{theorem}
The following operator $\mathbf{v}_2 - n\mathbf{v}_3 + y\mathbf{v}_1$ is a invariant transformation admitted by Cauchy problem
\begin{equation}
\begin{aligned}
&\pd{K}{t}(t,x;y) = \pdd{K}{x}(t,x;y)\\
&K(0,x;y) = \delta(x-y), \quad(x,y)\in\R\times\R
\end{aligned}
\end{equation}
\end{theorem}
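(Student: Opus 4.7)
The plan is to invoke Theorem \ref{special_generator_green}, which reduces invariance of the delta-initial Cauchy problem to three scalar conditions on the coefficients of the generator. First I would assemble the composite vector field explicitly: using $\mathbf{v}_1=\p_x$, $\mathbf{v}_2=2t\p_t+x\p_x$, $\mathbf{v}_3=u\p_u$, and specialising to the spatial dimension $n=1$ of Example 4.1, I would compute
\[
\mathbf{v} \;=\; \mathbf{v}_2 - n\mathbf{v}_3 + y\mathbf{v}_1 \;=\; 2t\p_t + (x+y)\p_x - nu\p_u,
\]
and read off the triple $\tau(t,x;y)=2t$, $\xi(t,x;y)$ the $\p_x$-coefficient, and $\phi(t,x;y)=-n$ required by the form $\mathbf{v}=\tau\p_t+\xi\p_x+\phi u\p_u$ used in Theorem \ref{special_generator_green}.

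Then I would verify the three conditions in sequence. Condition (i), $\tau(0,x;y)=0$, is immediate because the $\p_t$-coefficient is proportional to $t$. Condition (ii), $[\phi(0,x;y)+\xi_x(0,x;y)]\big|_{x=y}=0$, becomes the algebraic identity $-n+1=0$, which holds precisely because $n=1$ in this example; this is exactly what dictates the coefficient $-n$ attached to $\mathbf{v}_3$ in the linear combination. Condition (iii), $\xi(0,y;y)=0$, demands that the $\p_x$-coefficient vanish on the source diagonal $\{x=y\}$; this is the role of the translation $\mathbf{v}_1$, which recentres the scaling $\mathbf{v}_2$ onto the support of the delta and fixes the coefficient of $\mathbf{v}_1$ relative to $y$.

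I do not expect a serious analytic obstacle, because Theorem \ref{special_generator_green} has already reduced the geometric invariance problem to a finite set of algebraic identities, so the work is purely book-keeping once the triple $(\tau,\xi,\phi)$ has been identified. The only conceptual subtlety is condition (iii): one must recognise that the translation term is included precisely so that the fixed point of the spatial flow coincides with the source location $x=y$, which explains why neither $\mathbf{v}_2$ alone nor $\mathbf{v}_2-n\mathbf{v}_3$ alone is invariant for a delta source placed away from the origin, and why the combination listed in the statement is the natural candidate among the five-parameter symmetry algebra $\{\mathbf{v}_0,\ldots,\mathbf{v}_5\}$.
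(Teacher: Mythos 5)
Your overall strategy is exactly the paper's: reduce the claim to the three algebraic conditions of Theorem \ref{special_generator_green} and check them on the coefficient triple $(\tau,\xi,\phi)$. But your verification of condition (iii) contains a genuine gap that a literal computation would have exposed. You faithfully expand the operator as written in the statement and obtain $\xi(t,x;y)=x+y$; with that choice, however,
\begin{equation}
\xi(0,y;y) = y + y = 2y \neq 0 \quad\text{for } y\neq 0,
\end{equation}
so condition (iii) \emph{fails} for the very field you assembled. You assert the condition holds by appealing to the (correct) intuition that the translation term recentres the scaling $\mathbf{v}_2$ onto the support of the delta, but you never substitute $x=y$ into your own formula. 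The recentring works only if the translation coefficient is $-y$: the combination
\begin{equation}
\mathbf{v}_2 - n\mathbf{v}_3 - y\mathbf{v}_1 = 2t\p_t + (x-y)\p_x - nu\p_u
\end{equation}
gives $\xi(0,y;y)=0$, and this is the genuinely invariant generator, consistent with the characteristic $Q = -nu - \sum_{i}(x_i-y_i)\p_{x_i}u - 2t\p_t u$ quoted in Example 4.1 and with the exact heat kernel, which depends on $x$ only through $x-y$.

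The sign discrepancy originates in the theorem statement itself (the $+y\mathbf{v}_1$ there is evidently a typo), and the paper's proof sidesteps it by a slightly different device: it writes the candidate field with undetermined coefficients, $2t\p_t + (x+\tilde{\lambda})\p_x + \lambda u\p_u$, and lets the three conditions of Theorem \ref{special_generator_green} \emph{determine} $\lambda=-n$ and $\tilde{\lambda}=-y$. Had you carried out your own plan to the end --- actually evaluating $\xi(0,y;y)$ rather than describing what it should mean --- you would have been forced to the same correction. As it stands, your argument certifies invariance for an operator that violates one of the conditions you claim to verify. The fix is minor but necessary: either correct the sign of the $\mathbf{v}_1$ coefficient and redo the three checks (conditions (i) and (ii) are unaffected; your remark that (ii) reads $-n+1=0$ and pins $n=1$ in this one-dimensional setting is fine), or adopt the paper's undetermined-coefficient formulation, which proves the corrected statement and simultaneously explains why the coefficients must be exactly $-n$ and $-y$.
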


\begin{proof}
The only thing we need to do is to verify the condition of Theorem \ref{special_generator_green}. First we notice that both operators $\mathbf{v}_2, \mathbf{v}_3$ do not satisfy the invariant condition mentioned in Theorem \ref{special_generator_green}. In the following, we shall consider the new combination:
\begin{equation}
\mathbf{v}_2 - n\mathbf{v}_3 + y\mathbf{v}_1 = 2t\p_t + (x + \tilde{\lambda})\p_{x}  + \lambda u\p_u
\end{equation}
In this example, there are the following correspondence
\begin{equation}
\begin{aligned}
\tau(t,x;y) =& 2t\\
\xi(t,x;y) =& x + \tilde{\lambda}\\
\phi(t,x;y) =& \lambda
\end{aligned}
\end{equation}
Direct computation implies that
\begin{equation}
\begin{aligned}
\tau(0,x;y) \equiv 0
\end{aligned}
\end{equation}
and
\begin{equation}
[\phi(0,x;y) + \xi_x(0,x;y)]\Big|_{x=y} = \lambda + \nabla_x(x) = \lambda + n
\end{equation}
and
\begin{equation}
\xi(0, y;y) = y + \tilde{\lambda}
\end{equation}
Theorem \ref{special_generator_green} will hold if and only if $\lambda = -n$ and $\tilde{\lambda}=-y$ that is the desired result.
\end{proof}

\textbf{Derivation of symmetric condition for diffusion equation}

Recall the diffusion equation
\begin{equation}
u_t = xu_{xx} + bu_x,\quad b>0
\end{equation}
The corresponding symmetry group can be obtained by the routing procedure mentioned in the previous section.
\begin{equation}
\begin{aligned}
&\mathbf{v}_0  = u\p_u,\quad \mathbf{v}_1 = \p_t, \quad \mathbf{v}_2 = t\p_t + x\p_x\\
&\mathbf{v}_3 = t^2\p_t + 2tx\p_x - (x + bt)u\p_u
\end{aligned}
\end{equation}

\begin{theorem}
The following operator $\mathbf{v}_3 + \lambda\mathbf{v}_0$ is a invariant transformation admitted by Cauchy problem
\begin{equation}
\begin{aligned}
&\pd{K}{t}(t,x;y) = x\pdd{K}{x}(t,x;y) + \frac{1}{2}\pd{K}{x}(t,x;y)\\
&K(0,x;y) = \delta(x-y)
\end{aligned}
\end{equation}
\end{theorem}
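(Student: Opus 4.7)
The plan is to apply Theorem \ref{special_generator_green} directly to the combined generator
\begin{equation*}
\mathbf{v}_3 + \lambda\mathbf{v}_0 = t^2\p_t + 2tx\p_x + \bigl[\lambda - (x + bt)\bigr]u\p_u,
\end{equation*}
allowing the coefficient $\lambda$ to depend on the parameter $y$, exactly as the shift $y\mathbf{v}_1$ was used in the heat-kernel argument C4 to inject $y$-dependence into an otherwise $y$-free combination. Reading off the coefficients, I would identify $\tau(t,x;y) = t^2$, $\xi(t,x;y) = 2tx$ and $\phi(t,x;y) = \lambda - x - bt$, which is precisely the form covered by Theorem \ref{special_generator_green}.

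Next I would verify the three conditions of that theorem one by one. The first, $\tau(0,x;y)=0$, is immediate from the factor $t^2$, and the third, $\xi(0,y;y)=0$, follows from the factor $t$ in $\xi = 2tx$. Only the middle condition is nontrivial:
\begin{equation*}
\bigl[\phi(0,x;y) + \xi_x(0,x;y)\bigr]\big|_{x=y} = 0.
\end{equation*}
Since $\xi_x(0,x;y) = 0$ and $\phi(0,x;y) = \lambda - x$, this collapses to the single linear equation $\lambda - y = 0$, which forces the choice $\lambda = y$.

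Having pinned down $\lambda = y$, I would conclude that $\mathbf{v}_3 + y\mathbf{v}_0$ satisfies every hypothesis of Theorem \ref{special_generator_green} and therefore preserves the Cauchy problem. As a consistency check I would substitute back and recover the characteristic relation $\bigl(x - y + \tfrac{1}{2}t\bigr)K + t^2 K_t + 2tx K_x = 0$ already displayed for this example with $b = \tfrac{1}{2}$, which is exactly the symmetric residual used in the GsPINN training for the diffusion equation.

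I do not foresee any real obstacle. Unlike C4, where two free coefficients had to be tuned simultaneously against two non-trivial conditions, here only the single parameter $\lambda$ is available, and two of the three invariance conditions are automatic from the structure of $\mathbf{v}_3$ alone. The only subtlety worth highlighting is the implicit $y$-dependence of $\lambda$: it must be treated as an external parameter rather than a true constant of the symmetry algebra, in direct analogy with the role played by $y$ in the heat-equation proof.
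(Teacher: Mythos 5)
Your proposal is correct and follows essentially the same route as the paper's own proof: reading off $\tau = t^2$, $\xi = 2tx$, $\phi = \lambda - x - bt$, checking the three conditions of Theorem \ref{special_generator_green}, and pinning down $\lambda = y$ from the middle condition $[\phi(0,x;y)+\xi_x(0,x;y)]\big|_{x=y} = \lambda - y = 0$. Your closing remark that $\lambda$ must be read as an external parameter depending on $y$ matches the paper's own caveat in the proof of Theorem \ref{special_generator_green}, and your consistency check against the displayed characteristic relation with $b=\tfrac{1}{2}$ is a welcome addition not present in the paper.
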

\begin{proof}
The only thing we need to do is to verify the condition of Theorem \ref{special_generator_green}. First we notice that both operators $\mathbf{v}_0$ and $\mathbf{v}_3$ do not satisfy the invariance condition mentioned in Theorem \ref{special_generator_green}. In the following, we shall consider the new combination:
\begin{equation}
\mathbf{v}_3 + \lambda\mathbf{v}_0 = t^2\p_t + 2tx\p_x - (x + bt - \lambda)u\p_u
\end{equation}
In this example, there are the following correspondence
\begin{equation}
\begin{aligned}
&\tau(t,x;y) = t^2\\
&\xi(t,x;y) = 2tx\\
&\phi(t,x;y) = -(x + bt - \lambda)
\end{aligned}
\end{equation}
Direct computation implies that
\begin{equation}
\begin{aligned}
\tau(0,x;y) \equiv 0
\end{aligned}
\end{equation}
and
\begin{equation}
[\phi(0,x;y) + \xi_x(0,x;y)]\Big|_{x=y} = [-(x-\lambda) ]\Big|_{x=y} = \lambda - y
\end{equation}
and
\begin{equation}
\xi(0,x;y) = 0
\end{equation}
Theorem \ref{special_generator_green} will hold if and only if $\lambda = y$ that is the desired result.
\end{proof}
%%=============================================%%
%% For submissions to Nature Portfolio Journals %%
%% please use the heading ``Extended Data''.   %%
%%=============================================%%

%%=============================================================%%
%% Sample for another appendix section			       %%
%%=============================================================%%

%% \section{Example of another appendix section}\label{secA2}%
%% Appendices may be used for helpful, supporting or essential material that would otherwise 
%% clutter, break up or be distracting to the text. Appendices can consist of sections, figures, 
%% tables and equations etc.

\end{appendices}

%%===========================================================================================%%
%% If you are submitting to one of the Nature Portfolio journals, using the eJP submission   %%
%% system, please include the references within the manuscript file itself. You may do this  %%
%% by copying the reference list from your .bbl file, paste it into the main manuscript .tex %%
%% file, and delete the associated \verb+\bibliography+ commands.                            %%
%%===========================================================================================%%

\bibliography{myref}% common bib file
%% if required, the content of .bbl file can be included here once bbl is generated
%%\input sn-article.bbl

\end{document}